\newcommand{\tz}[2][1]{\vcenter{\hbox{\includegraphics[scale=#1]{graphics/#2.pdf}}}}
\title{The linear-non-linear substitution $2$-monad}
\author{Martin Hyland
\institute{DPMMS\\ University of Cambridge\\Cambridge, United Kingdom}
\email{M.Hyland@dpmms.cam.ac.uk}
\and
Christine Tasson
\institute{IRIF\\Université de Paris, CNRS\\ F-75013, Paris, France}
\email{\quad christine.tasson@irif.fr}
}
\begin{document}

\maketitle

\begin{abstract}
 We introduce a general construction on $2$-monads. We develop  background on maps of $2$-monads, their left semi-algebras, and colimits in $2$-category. Then, we introduce the construction of a colimit induced by a map of $2$-monads, show that we obtain the structure of a $2$-monad and give a characterisation of its algebras. Finally, we apply the construction to the map of $2$-monads between free symmetric monoidal and the free cartesian $2$-monads and combine them into a linear-non-linear $2$-monad.
\end{abstract}


A fundamental component of modern categorical logic is the treatment
of contexts.  In the standard approach, a sequence of variable
declarations is modelled by a product (of the interpretation of the
sorts or types) in some category. This point of view appeared early in
the approach to algebra by way of Lawvere
Theories~\cite{Lawverephd}. It has proved effective more widely, for
example in versions of “the internal language of toposes”. More subtle
interpretations of contexts are well established - fibrations over a
category of contexts in the case of Type
Theory~\cite{Lof1984,jacobs1999categorical}, symmetric modoidal
categories for Linear Logic~\cite{Girard87}; and over the last 20
years there has been increased awareness of the coherence issues. (In a small way, these are 
already apparent in Lawvere’s original treatment of algebra~\cite{Lawverephd}. For Type
Theory, the extent of the difficulties has been made clear by
Streicher~\cite{Streicher89} and Maietti~\cite{Maietti05}. For a
recent general take on  coherence issues see~\cite{Lumsdaine15}.)


This paper is a component of a project to consider (fragments of) the
differential lambda calculus~\cite{er:tdlc} from the point of view
of categorical algebra. For us the essence of such theories is the
idea of linear-non-linear contexts and to treat these we think it best
to set aside the standard treatment of contexts.  For ordinary algebra
we want to think not in terms of categories with products but in terms
of cartesian multicategories.  (Cartesian multicategories are the
abstract clones~\cite{Taylor1993} of the universal algebraists themselves formulated
abstractly. The connection is laid out in the early sections of~\cite{Hyland14monoid}.)
There are many current approaches to theories of multicategories,
driven at least to some extent by the great variety of extensions and
generalisations of the Theory of Operads which have proved of value
over the last 20 years. For our purposes we have firmly in mind (and
shall use in further papers) an approach via Kleisli Bicategories~\cite{fiore2018relative} as sketched in~\cite{Hyland14}.  This will involve an extension of the approach to variable
binding and substitution in abstract syntax~\cite{power2005binding, fiore2008cartesian,fioresubstitution,hirschowitz2007modules,Hyland17}.

For readers not familiar with substitution and variable binding, we
recall that it is based on the use of suitable $2$-monads $\mon$ on
$\Cat$ which extend to pseudo-monads on
$\Prof$~\cite{fiore2018relative}. In the corresponding Kleisli
bicategory, we can consider monads $M:A\slashedrightarrow \mon A$ and these can be
identified as generalised multicategories. For example, in case $\mon$
is the $2$-monad for symmetric monoidal categories, these are exactly
what are called many coloured operads or symmetric
multicategories~\cite{Curien12}. Similarly, in case $\mon$ is the
$2$-monad for categories with products, the monads in the Kleisli
bicategory are essentially many sorted algebraic
theories~\cite{Hyland14}.  In this paper, we show how to construct a
$2$-monad $\Dmon$ which would give rise to a notion of linear-non-linear multicategory or (what is for us the same thing) 
a linear-non-linear algebraic theory. 


As some motivation, we explain briefly what a linear-non-linear
algebraic theory consists of. Linear-non-linear theories have terms
which we can write $t({\bf x}^{\Svar} , {\bf y}^{\Cvar})$ where the
${\bf x}^{\Svar}$ is a collection of linear variables and
${\bf y}^{\Cvar}$ a collection of non-linear variables. To highlight
the point that terms are in context we write
${\bf x}^{\Svar}; {\bf y}^{\Cvar} \vdash t$ where now we think of
sequences of variables. (For the abstract definition it is best not to
keep the linear and non-linear variables apart with a stoup but doing
so makes the explanation clearer.)  Naturally equalities in the theory
will be given in context. The crucial issue is how the variables are
treated. Linear variables are to be handled as with symmetric operads:
one can permute variables in the context (Exchange) but one cannot
duplicate a linear variable (Contraction) nor create dummy such
(Weakening). On the other hand, all the standard manipulations are
available in the usual way for the non-linear variables. Substitution
for a linear variable is straightforward as shown on the lhs of the
Figure below.  All the interest arises from substitution for a
non-linear variable, see below on the rhs, the rule which makes all
variables in the substituted term non-linear. Of course one needs
variables declarations and these come both in linear form
$x^\Svar \vdash x$ and non-linear form $x^M \vdash x$. So by
substitution one can always regard a linear variable as non-linear but
not of course vice-versa.
\begin{figure}[h]
  \begin{subfigure}{0.4\textwidth}\centering
\begin{prooftree}
  \hypo{{\bf x}^\Svar, w^\Svar\ ;\  {\bf y}^\Cvar \vdash t}
  \hypo{{\bf u}^\Svar\ ;\ {\bf v}^\Cvar \vdash s}
  \infer2{{\bf x}^\Svar,{\bf  u}^\Svar\ ;\ {\bf v}^\Cvar, {\bf y}^\Cvar  \vdash t[s\slash w]}
\end{prooftree}
\label{fig1:sub}
\end{subfigure}
  \begin{subfigure}{0.4\textwidth}\centering
\begin{prooftree}
  \hypo{{\bf x}^\Svar\ ;\  {\bf y}^\Cvar, z^\Cvar \vdash t}
  \hypo{{\bf u}^\Svar\ ;\ {\bf v}^\Cvar \vdash s}
  \infer2{{\bf x}^\Svar\ ;\ {\bf y}^\Cvar,{\bf  u}^\Cvar,{\bf v}^\Cvar  \vdash t[s\slash z]}
\end{prooftree}
\label{fig2:sub}
\end{subfigure}
\end{figure}

It will be clear to those who like diagrams how to give a diagrammatic
notation for this. Non-linearity is like an infection: plugging a term
$s$ into a non-linear input causes all the inputs of $s$ to become
non-linear. The reader may find it helpful to compare this account
with the description of the corresponding 2-monad given in Section~\ref{subsec:dmon}.

In this paper we describe how to
obtain a 2-monad Q giving rise to linear-non-linear multicategories as above by
means of a particular general construction on $2$-monads in the sense of Cat-enriched monad theory~\cite{Dubuc1970}. Prima facie, the construction is not a universal one in a standard $2$-category of $2$-monads. All the same we are able precisely to characterise the $2$-category of algebras for the $2$-monad which we construct. This is a first step and further work will involve 2-dimensional monad theory in the sense of~\cite{blackwell19892mon}. Specifically, in the future, we shall address the question of extending our constructed $2$-monad on the $2$-category $\Cat$ of small categories to the corresponding bicategory $\Prof$ of profunctors or distributeurs~\cite{benabouprof,borceux1994handbook,benabou1967bicat}. We shall then use a resulting Kleisli bicategory~\cite{fiore2018relative} as the setting for an analysis of the foundations of the differential calculus as it appears in the differential $\lambda$-calculus~\cite{er:tdlc,blute2006differential,fiore2007differential}.  We shall also explain - probably in a paper separate from
the main development - how it is that the current standard semantics
for differential $\lambda$-calculus based on Linear Logic does indeed
give rise to a linear-non-linear semantics in our sense. (The naive
approach is not correct.)

Our project is based on $2$-monads on a $2$-category \(\cat\cK\) in the setting of the pioneering paper~\cite{blackwell19892mon}. Here, for a $2$-monad $\mon$ on $\cat\cK$, we follow the practice of that paper in writing $\Algs\mon$ for the $2$-category of strict $\mon$-algebras, strict $\mon$-algebra maps and $\mon$-algebra $2$-cells. We shall use more detailed information from~\cite{blackwell19892mon} in further papers.

In (enriched) categories of algebras for a monad, limits are easy and
it is colimits which are generally of more interest.  We assume
throughout that our ambient $2$-category $\cat\cK $ is cocomplete,
that our $2$-monads $\mon$ are such that the $2$-categories
$\Algs\mon$ are also cocomplete. In fact, we shall only need rather
innocent looking colimits in $\Algs\mon$, specifically the co-lax
colimit of an arrow. However, even that requires an infinite
construction~\cite{kelly1980}.  So it does not seem worth worrying
about minimal conditions for our results: we assume that we are in a
situation where all our $2$-categories are cocomplete. That happens
for example if our basic $2$-category is locally finitely presentable
and our monads are
finitary~\cite{KELLY1993163}.


\subsection*{Content}
In Section~\ref{sec:bg}, we first describe the background  on maps of $2$-monads (Subsection~\ref{subsec:mm}), left-semi algebras (Subsection~\ref{subsec:lsa}) and colimits (Subsection~\ref{subsec:col}), needed in our main Section~\ref{sec:cons}. There we define the colimits obtained from a map of monads (Subsection~\ref{subsec:def}) and exhibit their properties (Subsection~\ref{subsec:Qlsa}). Inspired by these properties, we define what we simply call the Structure $2$-category (Subsection~\ref{subsec:struc}).  We finally use (Subsection~\ref{subsec:Qmon}) the properties of the Structure $2$-category  to prove, in Theorem~\ref{th:Qmonad} that the colimit is a monad; and finally we prove our main Theorem~\ref{th:characterisation} which states that the Structure $2$-category is isomorphic to the $2$-category of strict algebras over the colimit monad. We end by spelling out the construction for two examples, the first one generates the left-semi algebra $2$-category (Proposition~\ref{prop:id}) and the second, what we call the linear-non-linear monad (Section~\ref{sec:lnl}) which was the original intention for developing this theory.

\section{Background}
\label{sec:bg}

\subsection{Maps of $2$-monads}
\label{subsec:mm}

The construction which we introduce here takes for its input a map $\Mmap:\Smon\rightarrow \Cmon$ of $2$-monads on $\cat\cK$. For clarity we stress that the usual diagrams commute on the nose. We rehearse some folklore related to this situation.

First, it is elementary categorical algebra that the monad map $\Mmap:\Smon\rightarrow \Cmon$ induces a $2$-functor
\(\lift \Mmap:\Algs\Cmon\rightarrow \Algs\Smon\).
On objects $\lift\Mmap$ takes an $\Cmon$-algebra $\Cmon X\rightarrow X$ to an $\Smon$-algebra $\Smon X\xrightarrow\Mmap \Cmon X \to X$.
It is equally evident that $\Mmap:\Smon\rightarrow \Cmon$ induces a $2$-functor
\(\shriek \Mmap:\kleisli\Smon\rightarrow \kleisli\Cmon\)
between the corresponding Kleisli $2$-categories. These 2-functors are essentially folklore. Given the evident relation between algebras for a (perhaps enriched!) monad
and modules for a ring,  $\Mmap^*$ can be called {\em restriction of scalars} and $\Mmap_!$ (or its extension see below)
{\em extension of scalars}. These connections are the driving force behind Durov’s PhD Thesis~\cite{durov2007}
which gives details of the phenomena.

We have the standard locally full and faithful comparisons:
\(  \kleisli\Smon \rightarrow \Algs\Smon\) and  \(\kleisli\Cmon \rightarrow \Algs\Cmon\).
Suppose we interpret $\shriek\Mmap$ as acting on the free algebras so that $\shriek\Mmap$ takes the free $\Smon$-algebra $\Smon^2A\xrightarrow\Smult \Smon A$ to the free $\Cmon$-algebra $\Cmon^2A\xrightarrow\Cmult \Cmon A$. 
Then we can see $\shriek\Mmap$ as a restricted left adjoint to $\lift\Mmap$ in the following sense. Given the free $\Smon$-algebra $\Smon^2A\xrightarrow\Smult \Smon A$ on $A$ and $\Cmon B\xrightarrow b B$ an arbitrary $\Cmon$-algebra, we have
\(\Algs\Smon(\Smon A,\lift \Mmap B) \simeq \Algs\Cmon(\shriek\Mmap\Smon A, B). \)
For $\shriek\Mmap(\Smon^2A\xrightarrow\Smult\Smon A)=\Cmon^2A\xrightarrow\Cmult\Cmon A$ and so both sides are isomorphic to $\cat\cK(A,B)$.

Any $\Smon$-algebra $\Smon A\xrightarrow a A$ lies in a coequalizer diagram in $\Algs\Smon$:
\(
\begin{tikzcd}
\Smon^2 A\ar[r,shift left=.75ex,"\Smult"]
  \ar[r,shift right=.75ex,swap,"\Smon a"]
  & \Smon A \ar[r,"a"]
  & a.
\end{tikzcd}
\)
So to extend $\shriek\Mmap$ to a full left adjoint $\shriek\Mmap:\Algs\Smon\Rightarrow \Algs\Cmon$ one has only to take the coequalizer of the corresponding pair in $\Algs\Cmon$:
\(\begin{tikzcd}
  \Cmon\Smon A\ar[r,shift left=.75ex,"\Cmult\Cmon \Mmap"]
  \ar[r,shift right=.75ex,swap,"\Cmon a"]
  & \Cmon A. 
\end{tikzcd}
\)
As it happens, we do not need the full left adjoint, but we shall need the unit of the adjunction given by the $\Smon$-algebra map $\Mmap_A$ from $\Smon^2 A \xrightarrow\Smult\Smon A$ to
\(\lift\Mmap\shriek\Mmap(\Smon^2 A\xrightarrow\Smult\Smon A)=\Smon\Cmon A\xrightarrow{\Mmap\Cmon} \Cmon^2 A\xrightarrow\Cmult \Cmon A\).

If $\tz{mm0}$ is an $\Smon$-algebra $2$-cell then the corresponding $2$-cell 
$\lift\Mmap\shriek\Mmap g\Rightarrow \lift\Mmap\shriek\Mmap g'$
is given by the composite $\tz{mm11}$ so that
\begin{equation}\label{eq:mm1}
\tz{mm111}
\end{equation}

\subsection{Left-semi Algebras}
\label{subsec:lsa}

In this section we present some theory of a generalization of the notion of $\mon$-algebra for a $2$-monad $\mon$. In effect, it is a mere glimpse of an extensive theory of  semi-algebra structure, in the sense of structure "up to a retraction", a terminology well-established in computer science. 
We do not need to have this background in place for the results which we give in this paper: we give only what is required to make the paper comprehensible. However, some impression of what is involved can be obtained by looking at~\cite{garner2019vietoris} which gives some theory in the $1$-dimensional context.

\begin{definition}
  Let $\mon$ be a $2$-monad on a $2$-category $\cat\cK$. A left-semi
  $\mon$-algebra structure on an object $Z$ of $\cat\cK$ consists of a
  $1$-cell $\mon Z\xrightarrow z Z$ and a $2$-cell
  $\epsilon:z.\eta\Rightarrow \id_Z$ 
  satisfying the following $1$-cell and $2$-cell equalities:
  \begin{center}
  \begin{minipage}{0.25\linewidth}
     \begin{equation}\label{semialg1com}
       \tz{lsa1}
    \end{equation}
  \end{minipage}\quad\vrule\quad
  \begin{minipage}{0.65\linewidth}
     \begin{equation}\label{semialg2com}
       \tz{lsa2}
    \end{equation}   
  \end{minipage}
  \end{center}
\end{definition}

\begin{remark}
  \begin{enumerate}
  \item The diagrams
    \begin{equation*}
      \tz{lsa3}\qtand \tz{lsa4}
    \end{equation*}
    demonstrate that Condition~\eqref{semialg1com} implies that the boundaries of the $2$-cells in Condition~\eqref{semialg2com} do match. 
  \item Condition~\eqref{semialg1com} is the standard composition for a strict $\mon$-algebra, while Condition~\eqref{semialg2com} is the unit condition for a colax $\mon$-algebra. 
  \end{enumerate}
\end{remark}

\begin{definition}
  Suppose that $\mon Z\xrightarrow z Z, \epsilon: z.\unit \Rightarrow \id_Z$ and $\mon W\xrightarrow w W, \epsilon: w.\unit \Rightarrow \id_W$ are left-semi $\mon$-algebras. A strict map from the first to the second consists of $p:Z\to W$ satisfying the following $1$-cell and $2$-cell equalities:
  \begin{center}
  \begin{minipage}{0.25\linewidth}
     \begin{equation}\label{semialg3com}
       \tz{lsa5}
    \end{equation}
  \end{minipage}\quad\vrule\quad
  \begin{minipage}{0.65\linewidth}
     \begin{equation}\label{semialg4com}
       \tz{lsa6}
    \end{equation}   
  \end{minipage}
  \end{center}
\end{definition}
\begin{remark}
  \begin{enumerate}
  \item The Condition~\eqref{semialg3com} with the naturality of $\unit$ imply that the boundaries of the $2$-cells in~\eqref{semialg4com} do match.
  \item The definition is the restriction to left-semi algebras of the evident notion of strict map of colax $\mon$-algebras.
  \item If $\mon Z\xrightarrow z Z, \epsilon: z.\unit\Rightarrow \id_Z$ is a left-semi algebra, then $\mon Z\xrightarrow z Z$ is a strict map to it from the free algebra $\mon^2 Z\xrightarrow \mult \mon Z$.
  \end{enumerate}
\end{remark}

\begin{proposition}
  Suppose that $\mon Z\xrightarrow z Z, \epsilon: z.\unit\Rightarrow \id_Z$ is a left-semi algebra. Then the composite $f:Z\xrightarrow\unit\mon Z\xrightarrow z Z$ is a strict endomap of the left-semi algebra.
\end{proposition}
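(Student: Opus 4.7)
The plan is to verify the two equations \eqref{semialg3com} and \eqref{semialg4com} defining a strict map of left-semi algebras, applied to the 1-cell $p := f = z.\unit_Z : Z \to Z$ with both source and target the given left-semi algebra $(Z, z, \epsilon)$. The tools available are: naturality of $\unit$ at 1-cells and at 2-cells, the two monad unit laws $\mult.\unit_{\mon Z} = \id_{\mon Z}$ and $\mult.\mon\unit_Z = \id_{\mon Z}$, and the two left-semi axioms \eqref{semialg1com} (strict associativity $z.\mon z = z.\mult$) and \eqref{semialg2com} (the colax unit 2-cell coherence). It is also worth keeping in mind the last point of the preceding remark: $z$ itself is a strict map from the free algebra to $(Z, z, \epsilon)$, so in some sense the work for $f = z.\unit_Z$ should reduce to checking that pre-composing with $\unit_Z$ preserves the relevant structure.

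For the 1-cell condition, I would show that both $f.z$ and $z.\mon f$ reduce to $z$. On the right, $z.\mon f = z.\mon z.\mon\unit_Z$; applying \eqref{semialg1com} rewrites $z.\mon z$ as $z.\mult$, and the monad unit law $\mult.\mon\unit_Z = \id_{\mon Z}$ collapses the result to $z$. On the left, naturality of $\unit$ at the 1-cell $z$ gives $\unit_Z.z = \mon z.\unit_{\mon Z}$, hence $f.z = z.\mon z.\unit_{\mon Z} = z.\mult.\unit_{\mon Z} = z$ by \eqref{semialg1com} and the other monad unit law. So the two 1-cells agree.

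For the 2-cell condition \eqref{semialg4com}, once the 1-cell equation is in hand both pastings live between the same pair of parallel 1-cells and involve $\epsilon$ at either end. The plan is to reduce both sides to a common normal form in two steps: (i) use the naturality square of $\unit$ applied to the 2-cell $\epsilon$ to convert the ``outer'' occurrence of $\epsilon$ (on the target side of $f$) into a 2-cell whiskered by $\mon\epsilon$, and (ii) invoke axiom \eqref{semialg2com}, which was set up precisely to equate the two ways of combining an outer $\epsilon$ with $\mon\epsilon$ through the strict multiplication $\mult$. Applied in the right order, these two moves collapse both pastings to the same 2-cell $z.\unit_Z \Rightarrow z.\unit_Z$.

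The main obstacle is this second step: the bookkeeping involved in matching the pasting diagrams of \eqref{semialg4com} with $p$ instantiated to $z.\unit_Z$ against the specific shape of \eqref{semialg2com}. The argument is essentially forced once the diagrams are drawn, but it does require careful tracking of which copy of $\epsilon$ plays the source coherence role and which plays the target role, together with an application of naturality of $\unit$ at the 2-cell $\epsilon$ to reconcile the two. The 1-cell step, by contrast, is a short routine calculation.
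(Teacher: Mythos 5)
Your argument is correct and is exactly the routine verification the paper leaves implicit (no proof is given there): the $1$-cell equation follows from naturality of $\unit$, the strict associativity axiom~\eqref{semialg1com} and the monad unit laws, and the $2$-cell equation follows by rewriting $f.\epsilon$ as $(z.\mon\epsilon).\unit_Z$ via $2$-naturality of $\unit$ at $\epsilon$ and then applying the colax unit axiom~\eqref{semialg2com} to identify $z.\mon\epsilon$ with $\epsilon.z$, whence both sides equal $\epsilon . f$. No gaps; the only vagueness is in the precise shape of~\eqref{semialg2com}, which your step (ii) uses in exactly the intended way.
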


Finally, we consider $2$-cells between maps of left-semi algebras.
\begin{definition}
  Suppose that $p,q:Z\to W$ are strict maps of left-semi algebras from $\mon Z\xrightarrow z Z, \epsilon: z.\unit\Rightarrow \id_Z$ to $\mon W\xrightarrow w W, \epsilon: w.\unit\Rightarrow \id_W$. A $2$-cell from $p$ to $q$ consists of a $2$-cell $\gamma:p\Rightarrow q$ such that the equality $\tz{lsa7}$ holds. 
\end{definition}
\begin{remark}
  Again, this is simply the restriction to the world of left-semi algebras of the definition of $2$-cells for colax $\mon$-algebras.
\end{remark}

\begin{proposition}
  Suppose that  $\mon Z\xrightarrow z Z, \epsilon: z\unit\Rightarrow \id_Z$ is a left-semi $\mon$-algebra, so that both $z.\unit$ and $\id_Z$ are strict endomaps. Then $\epsilon:z.\unit\Rightarrow\id_Z$ is a left-semi $\mon$-algebra $2$-cell.
\end{proposition}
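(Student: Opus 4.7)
The plan is to unwind the definition of a $2$-cell between strict maps of left-semi algebras for the specific data $p = z.\unit$, $q = \id_Z$, $\gamma = \epsilon$, and to observe that the condition which emerges is nothing but the axiom~\eqref{semialg2com} of the given left-semi algebra $(Z,z,\epsilon)$. So the argument is essentially a matter of identifying the right piece of structure.

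First I would invoke the preceding proposition, which says that $z.\unit$ is a strict endomap of $(Z,z,\epsilon)$; the map $\id_Z$ is obviously such a strict endomap as well. Thus both the source and target of $\epsilon: z.\unit\Rightarrow \id_Z$ are strict maps of left-semi algebras, and it is meaningful to ask whether $\epsilon$ itself is a left-semi algebra $2$-cell. In particular, the strict-morphism equality for $z.\unit$ gives $(z.\unit).z = z.\mon z.\mon\unit$, while $\id_Z.z=z$, so the boundaries of the whiskered $2$-cells below will be compatible.

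Next I would use the remark that a $2$-cell between strict maps of left-semi algebras is the restriction of the corresponding notion for colax $\mon$-algebras. When the source and target maps are both strict, their structure $2$-cells are identities, and the colax-morphism $2$-cell axiom collapses to the single whiskering equation
\[
\gamma \cdot z \;=\; z \cdot \mon\gamma.
\]
Specialising to $\gamma=\epsilon$, both sides are $2$-cells from $z.\mon z.\mon\unit$ to $z$, and the required equality $\epsilon\cdot z = z\cdot\mon\epsilon$ is exactly the content of axiom~\eqref{semialg2com}. Hence it holds by assumption on $(Z,z,\epsilon)$.

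The main (and essentially only) obstacle is the purely bureaucratic step of confirming that the pictorial equation hidden in the definition of a $2$-cell of left-semi algebras really does reduce, in the strict-morphism case with coinciding source and target algebras, to the unit axiom~\eqref{semialg2com}; once that identification is made, the result is immediate from the hypothesis that $\epsilon$ makes $(Z,z,\epsilon)$ a left-semi algebra.
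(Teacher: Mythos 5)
Your proof is correct and is exactly the verification the paper leaves implicit: with both $z.\unit$ and $\id_Z$ strict endomaps, the single $2$-cell axiom for a left-semi algebra $2$-cell reduces to the whiskering equation $\epsilon\cdot z = z\cdot\mon\epsilon$, which is precisely axiom~\eqref{semialg2com}. Nothing further is needed.
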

At this point, it is straightforward to check that left-semi $\mon$-algebras, strict maps and $2$-cells form a $2$-category that we denote as $\lsalg\mon$.

Looking more closely at what we showed above we see that if we set $f=z\unit$, then we have $f=f^2$ and $\epsilon.f=\idm f=f.\epsilon$. So, in fact, we have the following.
\begin{proposition}
  Suppose that  $\mon Z\xrightarrow z Z, \epsilon: z.\unit\Rightarrow \id_Z$ is a left-semi $\mon$-algebra. Then, in the $2$-category $\lsalg\mon$, the $1$-cell $f$ and the $2$-cell $\epsilon:f\Rightarrow \id_Z$ equip the left-semi $\mon$-algebra with the structure of a strictly idempotent comonad.
\end{proposition}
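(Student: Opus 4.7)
The remark preceding the statement has already boiled the claim down to three equations in $\cat\cK$: the strict idempotence $f\circ f = f$, and the two counit identities $\epsilon\cdot f = \idm f$ and $f\cdot\epsilon = \idm f$. Together with the observation that $f$ is a strict endomap (by the earlier proposition) and that $\epsilon$ is a $2$-cell in $\lsalg\mon$ (by the immediately preceding proposition), these equations are exactly the data and axioms for a strictly idempotent comonad on $Z$ in $\lsalg\mon$ whose underlying $1$-cell is $f$, whose comultiplication is $\idm f \colon f \Rightarrow f\circ f = f$, and whose counit is $\epsilon$. So the proof splits into: verify the three equations in $\cat\cK$, then promote them to $\lsalg\mon$.

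For the first equation, expand $f\circ f = z\cdot \unit\cdot z\cdot \unit$, apply naturality of $\unit$ along $z$ to rewrite it as $z\cdot\mon z\cdot \unit_{\mon Z}\cdot \unit_Z$, use the associativity axiom~\eqref{semialg1com} to replace $z\cdot\mon z$ by $z\cdot\mult$, and finish with the monad unit law $\mult\cdot\unit_{\mon Z} = \idm{\mon Z}$. This yields $f\circ f = z\cdot \unit_Z = f$.

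For the counit identities, the key input is the colax unit axiom~\eqref{semialg2com}. Whisker $\epsilon\colon z\cdot\unit \Rightarrow \idm Z$ on the right by $f = z\cdot\unit$ and paste with the naturality square of $\unit$ at $z$; axiom~\eqref{semialg2com} then identifies the result with a $2$-cell constructed from $\mon\epsilon$ precomposed by the monad multiplication, which collapses to $\idm f$ using the monad unit laws. The symmetric argument, using naturality of $\epsilon$ against $f$ on the other side, gives $f\cdot\epsilon = \idm f$. (Because we already know $f\circ f = f$, the sources and targets of these two whiskerings are literally equal, so the identities make sense on the nose.)

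Finally, everything lifts to $\lsalg\mon$. The earlier proposition promotes $f$ to a strict map of left-semi algebras, the preceding proposition promotes $\epsilon$ to a $2$-cell of $\lsalg\mon$, and the three identities proved above hold verbatim there. Taking $\idm f$ as comultiplication makes coassociativity trivial, and the two counit identities are precisely the counit axioms, so $(f,\idm f,\epsilon)$ is a strictly idempotent comonad in $\lsalg\mon$. The main obstacle is purely the $2$-cell bookkeeping in the second step: only one of the two counit identities follows directly from~\eqref{semialg2com}, while the other requires an extra naturality paste to bring it into the form to which~\eqref{semialg2com} applies.
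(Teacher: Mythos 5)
Your overall strategy coincides with the paper's: its entire argument is the sentence preceding the statement, namely that $f=f^2$ and $\epsilon.f=\idm f=f.\epsilon$, after which the comonad structure with identity comultiplication is immediate; you are supplying the computations the paper leaves implicit. Your derivation of $f^2=f$ from the naturality of $\unit$, axiom~\eqref{semialg1com} and the monad unit law is correct, and the final lifting step to $\lsalg\mon$ is fine, since the two preceding propositions already place $f$ and $\epsilon$ inside $\lsalg\mon$ and equations of $2$-cells there can be checked in $\cat\cK$.

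The weak point is your justification of the counit identities. You say that axiom~\eqref{semialg2com} identifies $\epsilon.f$ with ``a $2$-cell constructed from $\mon\epsilon$ precomposed by the monad multiplication, which collapses to $\idm f$ using the monad unit laws''. The monad unit laws are equalities of $1$-cells; whiskering a generally non-identity $2$-cell such as $z.\mon\epsilon$ by $1$-cells and then simplifying the $1$-cell composites can never turn it into an identity $2$-cell. If~\eqref{semialg2com} merely related the two whiskerings $\epsilon.z$ and $z.\mon\epsilon$ to each other, the proposition would in fact fail: one could not conclude $\epsilon.f=\idm f$. What does the work is that~\eqref{semialg2com} is the colax unit axiom in the presence of a \emph{strict} associator~\eqref{semialg1com}, so the colax structure cell it would normally be compared against is an identity; the axiom therefore asserts on the nose that the relevant whiskering of $\epsilon$ with $z$ equals the identity $2$-cell on $z=z\,\mult\,\unit_{\mon Z}$. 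The identity is the content of the axiom, not a consequence of the unit laws. Granting that, $\epsilon.f=(\epsilon.z).\unit_Z=\idm z.\unit_Z=\idm f$, and the second identity follows via the $2$-naturality of $\unit$ as you indicate. A cleaner route to the second identity --- and the one the paper's phrase ``what we showed above'' points to --- is to invoke the preceding propositions directly: the $2$-cell axiom~\eqref{semialg4com} making $f=z\unit$ a strict endomap already yields $f.\epsilon=\epsilon.f$, so the second counit identity is free once the first is established.
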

Applying the evident forgetful $2$-functor we get that $f=f^2$ and $\epsilon: f\Rightarrow \id_Z$ equip $Z$ with the structure of a strictly idempotent comonad in the underlying $2$-category $\cat\cK$.

\begin{proposition}\label{prop:comonlsa}
  Suppose that $\mon X\xrightarrow x X$ is a $\mon$-algebra and $f=f^2:X\to X$ and $\epsilon: f\Rightarrow \id_X$ equip $X$ with the structure of a strictly idempotent comonad in $\Algs\mon$. Then $\mon X\xrightarrow x X\xrightarrow f X,\epsilon: fx\unit\Rightarrow \id_X$ is a left-semi $\mon$-algebra.
\end{proposition}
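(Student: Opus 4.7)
The plan is to verify the two axioms of a left-semi $\mon$-algebra on the data $(z := fx,\ \epsilon: fx\cdot\unit \Rightarrow \id_X)$ directly. First I observe that the boundary of $\epsilon$ is correct: the algebra unit law $x \cdot \unit_X = \id_X$ gives $fx \cdot \unit = f$, so the counit $\epsilon: f \Rightarrow \id_X$ supplies a $2$-cell with the required domain $z \cdot \unit$ and codomain $\id_X$.

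For the $1$-cell equality~\eqref{semialg1com}, I would compute
$$z \cdot \mon z \;=\; fx \cdot \mon f \cdot \mon x \;=\; f \cdot (x \cdot \mon f) \cdot \mon x \;=\; f \cdot f \cdot x \cdot \mon x \;=\; f^2 \cdot x \cdot \mult \;=\; f \cdot x \cdot \mult \;=\; z \cdot \mult,$$
using in turn that $f$ is a $1$-cell in $\Algs\mon$ (so $x \cdot \mon f = f \cdot x$), the associativity axiom for $(X, x)$, and the strict idempotence $f^2 = f$.

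For the $2$-cell equality~\eqref{semialg2com}, the verification rests on three ingredients. Firstly, $x \cdot \mon\epsilon = \epsilon \cdot x$, because $\epsilon$ is a $2$-cell in $\Algs\mon$ between the algebra maps $f$ and $\id_X$. Secondly, $\epsilon \cdot f = \id_f = f \cdot \epsilon$: since $f^2 = f$ strictly, the comultiplication $\delta$ of the comonad is the identity and the comonad counit axioms reduce exactly to the triviality of these two whiskerings. Thirdly, $x \cdot \unit_X = \id_X$, already used above. With these in hand, each whiskering of $\epsilon$ by $f$ or by $x$ that appears in~\eqref{semialg2com} can either be moved past $x$ (using the first identity) or collapsed to an identity (using the second), so that both sides of the diagram paste to the same $2$-cell.

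The main obstacle I anticipate is not conceptual but combinatorial: reading off the precise shape of the pasting diagram~\eqref{semialg2com} and expressing each intermediate $2$-cell in a normal form. Once that is done, every step is a single application of one of the three identities above, together with the interchange law, so the argument is essentially bookkeeping.
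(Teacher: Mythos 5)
Your proposal is correct and follows essentially the same route as the paper's (much terser) proof sketch: the $1$-cell equality is the routine computation you give, and the $2$-cell equality reduces to exactly the ingredients you name, namely that $\epsilon$ is a $2$-cell in $\Algs\mon$ (so $x\cdot\mon\epsilon=\epsilon\cdot x$) together with the strict-idempotence identities $\epsilon\cdot f=\idm f=f\cdot\epsilon$ and the unit law $x\cdot\unit=\id_X$. With those in hand each whiskering of $\epsilon$ appearing in~\eqref{semialg2com} collapses to $\idm{fx}$, so the remaining "bookkeeping" you defer is genuinely immediate.
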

\begin{proof}[Proof sketch]
  The $1$-cell part is routine and the $2$-cell uses that $\epsilon$ is a $2$-cell in $\Algs \mon$.
\end{proof}

\begin{definition}
  Suppose that $\monS$ and $\mon$ are $2$-monads. A left-semi monad map  from the first to the second consists of $\lambda:\monS\to\mon$ satisfying the following equalities
  \begin{center}
  \begin{minipage}{0.4\linewidth}
     \begin{equation}\label{semialg5com}
       \tz{lsa9}
    \end{equation}
  \end{minipage}\quad\vrule\quad
  \begin{minipage}{0.5\linewidth}
     \begin{equation}\label{semialg6com}
       \tz{lsa13}
    \end{equation}   
  \end{minipage}
  \begin{equation}\label{semialg7com}
    \tz{lsa10}
  \end{equation}   
\end{center}
\end{definition}

\begin{proposition}\label{prop:mapmonlsa}
  Suppose that  $\mon Z\xrightarrow z Z, \epsilon: z.\unit\Rightarrow \id_Z$ is a left-semi $\mon$-algebra and $\monS\xrightarrow \lambda\mon,\gamma:\lambda.\unit\Rightarrow\unit$ is a left-semi monad map. Then $\monS Z \xrightarrow{\lambda_Z}\mon Z \xrightarrow z Z,\epsilon.\gamma:z.\lambda.\unit\Rightarrow\id_Z$ is a left-semi $\monS$-algebra.
\end{proposition}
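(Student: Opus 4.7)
The plan is to verify directly the two defining equalities~\eqref{semialg1com} (1-cell) and~\eqref{semialg2com} (2-cell) for a left-semi $\monS$-algebra, applied to the composite structure $z\cdot\lambda_Z: \monS Z \to Z$ equipped with the pasted 2-cell $\epsilon\cdot(z\gamma_Z):\, z\cdot\lambda_Z\cdot\unit^\monS_Z \Rightarrow \id_Z$. No machinery beyond naturality of $\lambda$, functoriality of $\mon$, the left-semi $\mon$-algebra axioms for $(z,\epsilon)$, and the three defining equations~\eqref{semialg5com},~\eqref{semialg6com},~\eqref{semialg7com} of a left-semi monad map should be needed.

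For the 1-cell part, I would start from $z\cdot\lambda_Z\cdot\monS(z\cdot\lambda_Z)$, use naturality of $\lambda$ to move $\lambda_{\monS Z}$ past $\monS(z\cdot\lambda_Z)$, obtaining $z\cdot\mon(z\cdot\lambda_Z)\cdot\lambda_{\monS Z}$. Functoriality of $\mon$ splits this as $z\cdot\mon z\cdot\mon\lambda_Z\cdot\lambda_{\monS Z}$. Applying the 1-cell axiom~\eqref{semialg1com} for $(z,\epsilon)$ gives $z\cdot\mult^\mon_Z\cdot\mon\lambda_Z\cdot\lambda_{\monS Z}$; finally the monad-map multiplicativity equation~\eqref{semialg7com} rewrites this as $z\cdot\lambda_Z\cdot\mult^\monS_Z$, which is the required equality.

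For the 2-cell part, I would proceed by pasting diagrams. The target equality~\eqref{semialg2com} for the $\monS$-structure involves whiskering the 2-cell $\epsilon\cdot(z\gamma_Z)$ in two ways (one through $\monS$, one through the identity) and requiring the two composites to coincide. I would first use the unit-compatibility~\eqref{semialg5com} or~\eqref{semialg6com} of $\gamma$ (whichever governs the interaction of $\gamma$ with $\monS$'s unit) to move the $\gamma$-part of the pasting into the shape required by the $\mon$-axiom, and then invoke the 2-cell axiom~\eqref{semialg2com} for $(z,\epsilon)$ itself. What remains is a bookkeeping step using naturality of $\gamma$ and of $\unit^\monS$ to close the diagram.

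The principal obstacle is the 2-cell verification: unlike the 1-cell part, it is not a linear sequence of substitutions but a pasting-diagram calculation in which one must choose the correct order in which to apply the compatibility axioms for $\gamma$ and the colax-unit axiom for $\epsilon$. I expect that once the diagram is drawn, the middle-four-interchange together with the two relevant $\gamma$-axioms reduces it to an instance of~\eqref{semialg2com} for $(z,\epsilon)$; but getting the whiskerings aligned correctly is where care is required.
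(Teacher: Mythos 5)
Your proposal is correct and follows essentially the same route as the paper, which likewise verifies the two axioms directly: the $1$-cell equality by the routine chain (naturality of $\lambda$, the strict multiplication axiom for $z$, and the multiplicativity square~\eqref{semialg7com} of the monad map), and the $2$-cell equality by using naturality to separate the pasting into a $\gamma$-part handled by~\eqref{semialg5com}--\eqref{semialg6com} and an $\epsilon$-part handled by~\eqref{semialg2com}. Your account is, if anything, more explicit than the paper's two-line sketch.
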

\begin{proof}[Proof sketch.]
  The $1$-cell part is routine and the $2$-cell parts use the naturality of $\lambda$ to separate the two $2$-cells $\gamma$ and $\epsilon$.
\end{proof}

\subsection{Colax colimits induced by a map in $2$-category}
\label{subsec:col}

In this section we review the notion of colax colimits in a cocomplete $2$-category specialised to our context~\cite{BIRD19891,Lack2010}.

In the $2$-category $\cat\cK$, suppose that $\Dcell$ is a colax cocone $(\Scol,\Ccol,\Dcell)$ under the arrow $\Mmap$ (see Figure below
, left).
Then, for every $D$, composition with $\Dcell$ induces an isomorphism of categories between $\cat\cK(C,D)$ and the category of colax cocones under the arrow $\Mmap$ with objects $(f,g,\phi)$ (see Figure below, 
center) and $1$-cells $(f,g,\phi)\to (f',g',\phi')$ given by $2$-cells $f\xRightarrow \rho f'$ and $g\xRightarrow \sigma g'$ such that $\compC\rho\phi=\compC{\phi'}{\sigma.\lambda}$ (see Figure below 
right).
\begin{figure}[h]
\begin{subfigure}[t]{0.18\textwidth}
  \centering
  \[\tz{col1}\]
\end{subfigure}\hfill
\begin{subfigure}[t]{0.18\textwidth}
  \centering
  \[\tz{col2}\]
\end{subfigure}
\begin{subfigure}[t]{0.60\textwidth}
  \centering
  \[\tz{col3}\]
\end{subfigure}
\end{figure}

This isomorphism of categories has two universal aspects, the first is $1$-dimensional and the second is $2$-dimensional:
\begin{itemize}
\item 
  for any $\tz{col2}$ there is a unique $r$ 
  such that $\tz{col4}=\phi$
\item 
  for any $\tz{col3}$ there is a unique  $r\xRightarrow \tau r'$ such that
\end{itemize}
  \begin{equation}\label{eq:2cellcol}
  \tz{col5}\qtand\tz{col6}
\end{equation}

Although we will require colax colimits in the $2$-category of $\Algs\Smon$ where what happens is more subtle, we illustrate this definition by computing colax colimits in the $2$-category $\Cat$.
\begin{example}
  In $\Cat$, $A\xrightarrow\Mmap B$ is a functor between categories. The colax colimit under $\Mmap$ is a category $C$ which consists of separate copies of $A$ and $B$ together with, for every object $a\in A$,  new maps $\Mmap(a)\xrightarrow{\Dcell_a} a$, composition of such and evident identifications. Precisely, maps from $b\in B$ to $a\in A$ are given by $b\xrightarrow v\Mmap(a) \xrightarrow{\Dcell_A} a$ and $C(b,a)\simeq B(b,\Mmap(a))$.
\end{example}

\section{The colimit $2$-monad induced by a map of $2$-monads}
\label{sec:cons}

From now on, we assume that $\Smon$ is a finitary $2$-monad, so that $\Algs\Smon$ is cocomplete~\cite{KELLY1993163}.

\subsection{Definition of the colimit and its $2$-naturality}
\label{subsec:def}

\begin{definition}\label{prop:def}
  Suppose that $\Mmap:\Smon\to\Cmon$ is a map of $2$-monads. 
  Then the  colax colimit $(\Dmon X,\SDmap)$ under the induced $\Mmap_X:(\Smon X,\Smult)\to(\Cmon X,\Cmult)$ in $\Algs\Smon$  satisfies
  \begin{equation}\label{eq:cons1}\tz{cons0}
  \end{equation}
\end{definition}

\begin{proposition}\label{prop1:def}
  The colax colimit $(\Dmon X,\SDmap)$ is natural in $(\Smon X,\Smult)$.
\end{proposition}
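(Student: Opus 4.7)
The plan is to invoke the $1$- and $2$-dimensional universal properties of the colax colimit recalled in Subsection~\ref{subsec:col}. Unpacking the statement, a morphism $f:X\to Y$ in $\cat\cK$ induces the $\Smon$-algebra map $\Smon f:(\Smon X,\Smult)\to (\Smon Y,\Smult)$ and, likewise (and therefore a fortiori an $\Smon$-algebra map via $\lift\Mmap$), the $\Cmon$-algebra map $\Cmon f:(\Cmon X,\Cmult)\to (\Cmon Y,\Cmult)$. Naturality of $\Mmap$ gives $\Mmap_Y\cdot\Smon f=\Cmon f\cdot\Mmap_X$, so the pair $(\Smon f,\Cmon f)$ constitutes a morphism of arrows from $\Mmap_X$ to $\Mmap_Y$ in $\Algs\Smon$.

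To produce the $1$-cell $\Dmon f:\Dmon X\to \Dmon Y$ I form a new colax cocone in $\Algs\Smon$ under $\Mmap_X$ with vertex $\Dmon Y$: postcompose the two legs of the $Y$-cocone with $\Smon f$ and $\Cmon f$ respectively and whisker its structure $2$-cell with $\Smon f$. The boundaries agree by the naturality square for $\Mmap$ just noted. The $1$-dimensional universal property of $(\Dmon X,\SDmap)$ then yields a unique $\Smon$-algebra map $\Dmon f:\Dmon X\to \Dmon Y$ through which this cocone factors. For a $2$-cell $\alpha:f\Rightarrow g$ in $\cat\cK$, whiskering $\alpha$ by $\Smon$ and by $\Cmon$ produces the data required by the $2$-dimensional universal property~\eqref{eq:2cellcol}, giving a unique $\Dmon\alpha:\Dmon f\Rightarrow\Dmon g$.

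Functoriality---preservation of identities, of composition, and of vertical and horizontal composition of $2$-cells---then follows immediately from the uniqueness clauses in the universal property, since in each case both sides of the desired equation are induced by the same colax cocone (or the same compatible pair of $2$-cells). The only point that needs attention, rather than a substantive obstacle, is that the universal property is that of a colax colimit in $\Algs\Smon$ rather than in $\cat\cK$, so one must check throughout that the whiskered legs and $2$-cells remain $\Smon$-algebra maps and $\Smon$-algebra $2$-cells. This is automatic: $\Smon f$, $\Cmon f$ and $\Mmap$ are all maps of free $\Smon$-algebras, $\lift\Mmap$ preserves the $\Smon$-algebra structure on the nose, and postcomposition and whiskering with such maps in $\Algs\Smon$ preserve the relevant commutativities. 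The proof thus reduces to disciplined application of the universal property.
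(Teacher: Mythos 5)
Your overall strategy --- transport the universal colax cocone along induced maps and appeal to the $1$- and $2$-dimensional universal properties, with functoriality falling out of uniqueness --- is exactly the paper's strategy, and what you prove is proved correctly. But you prove a weaker statement than the one asserted. The proposition claims naturality in the \emph{free algebra} $(\Smon X,\Smult)$, i.e.\ with respect to arbitrary $\Smon$-algebra maps $g:(\Smon A,\Smult)\to(\Smon B,\Smult)$ and $\Smon$-algebra $2$-cells between them (equivalently, Kleisli maps $A\to\Smon B$), whereas you only treat morphisms of the special form $\Smon f$ and $\Smon\alpha$ for $f,\alpha$ in $\cat\cK$. The obstruction to extending your argument is precisely the step you dispose of by ``naturality of $\Mmap$'': for a general $g:\Smon A\to\Smon B$ there is no $\Cmon f$ available, so you have no candidate for the $\Cmon$-leg of the transported cocone and no commuting square over $\Mmap_A$, $\Mmap_B$.

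This is what the background of Subsection~\ref{subsec:mm} is set up to supply, and what the paper's proof uses. For a Kleisli map $g$ one takes $\shriek\Mmap g:\Cmon A\to\Cmon B$ (extension of scalars), and the required square $\Mmap_B\, g=\lift\Mmap\shriek\Mmap(g)\,\Mmap_A$ is the naturality of the unit $\Mmap_A$ of the restricted adjunction $\shriek\Mmap\dashv\lift\Mmap$; the corresponding compatibility for a $2$-cell $g\Rightarrow g'$ is Equation~\eqref{eq:mm1}. With these in hand, the cocone $(\Scol\, g,\ \Ccol\,\shriek\Mmap g,\ \Dcell. g)$ under $\Mmap_A$ is well formed and the universal properties give $\widehat g:\Dmon A\to\Dmon B$ and the $2$-cell $\widehat g\Rightarrow\widehat{g'}$ via~\eqref{eq:2cellcol}, exactly as you do in the special case. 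So the fix is not a new idea but a substitution: replace $(\Smon f,\Cmon f)$ by $(g,\shriek\Mmap g)$ throughout, justified by the adjunction unit rather than by naturality of $\Mmap$. Your version does suffice to make $\Dmon$ a $2$-functor on $\cat\cK$, but it does not establish the stated naturality in $(\Smon X,\Smult)$.
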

\begin{proof}[Proof sketch]
  Assume  $\tz{cons00}$ is an $\Smon$-algebra $2$-cell.
For each $1$-cell we get by $1$-cell naturality a cocone and so we get a unique maps $\widehat g$  mapping $\Dmon A$ to $\Dmon B$ arising from $1$-cell universality. We then have 
\[
\tz{cons1}=\tz{cons2}
\]
and similarly for $g'$ and $\widehat{g'}$. By $2$-cell universality~\eqref{eq:2cellcol}, we then get:
\[
\tz{cons1a}
\]
\end{proof}

\subsection{A left semi-algebra}
\label{subsec:Qlsa}
We explore the properties of $\Dmon X$ by considering $1$ and $2$ dimensional aspects of trivial cocones under $\Mmap$.
From the identity cocone under $\Mmap$, a unique $\Smon$-algebra map $\Ret$ arises by $1$-dimensional universality.
\begin{equation}
  \label{eq:cons3}
  \tz{cons3} \qqtand \left\{
  \begin{array}{l}
    \Ret\,\Scol=\Mmap_X \\
    \Ret\,\Ccol=\id_{\Cmon X}\\
    \Ret . \Dcell =\idm\Mmap
  \end{array}\right.
\end{equation}
If  $\tz{cons00}$ is an $\Smon$-algebra $2$-cell,  then by $2$-dimensional universality $\Ret$ we will get:
\[
\tz{cons30}
\].
\\
From the $2$-cells $\idm\Ccol:\Ccol=\Ccol$ and $\Dcell:\Ccol\,\Mmap \Rightarrow\Scol$, arises a unique $\Algs\Smon$ $2$-cell $\Icell:\Ccol\,\Ret\Rightarrow \id_{\Dmon X}$ \sut
\begin{equation*}
  \tz{cons4}\qtand  \tz{cons5}
\end{equation*}

Denote $\comon=\Ccol\,\Ret$. Then $\Dmon X$ is a $\Smon$-algebra and $\comon=\comon^2:\Dmon X\to\Dmon X$ and $\Icell:\comon\Rightarrow\id_{\Dmon X}$ equips $\Dmon X$ with the structure of a strictly idempotent comonad natural in $\Algs\Smon$ as
$\Icell .\Ccol =\idm\Ccol$, $\Icell .\Scol =\Dcell$, and thus  $\Ret .\Icell = \idm\Ccol$. We apply Proposition~\ref{prop:comonlsa} and get
\begin{proposition}\label{prop:Dstruct1}
  $\Smon\Dmon X\xrightarrow \SDmap \Dmon X \xrightarrow \Ret \Cmon X\xrightarrow \Ccol \Dmon X$ with $\Icell:\Ccol\Ret\SDmap\Sunit=\Ccol\Ret\Rightarrow \id_{\Dmon X}$ is a left-semi $\Smon$-algebra.
\end{proposition}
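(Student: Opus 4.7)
My plan is to apply Proposition~\ref{prop:comonlsa} directly, since the preceding discussion has essentially assembled all the ingredients that proposition requires. To invoke it, I need three inputs: (i) an $\Smon$-algebra structure on $\Dmon X$; (ii) a strictly idempotent endomorphism $\comon : \Dmon X \to \Dmon X$; and (iii) a 2-cell $\comon \Rightarrow \id_{\Dmon X}$ equipping $\comon$ with the structure of a strictly idempotent comonad in $\Algs\Smon$.

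Input (i) is free: $\Dmon X$ is built as a colax colimit inside $\Algs\Smon$ (Definition~\ref{prop:def}), so it comes with its structure map $\SDmap : \Smon\Dmon X \to \Dmon X$. For (ii), I take $\comon = \Ccol\,\Ret$ and check idempotency by a one-line calculation using the identity $\Ret\,\Ccol = \id_{\Cmon X}$ recorded in~\eqref{eq:cons3}: we have $\comon\,\comon = \Ccol\,(\Ret\,\Ccol)\,\Ret = \Ccol\,\Ret = \comon$.

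For (iii), I first note that $\Icell$ is already constructed as a 2-cell in $\Algs\Smon$, via the 2-dimensional universal property of the colax colimit applied to $\idm\Ccol$ and $\Dcell$. The comonad coherence in the strictly idempotent setting amounts to the equalities $\Icell\,\comon = \comon\,\Icell = \idm\comon$, and these follow from the whiskering identities $\Icell\,\Ccol = \idm\Ccol$ and $\Ret\,\Icell = \idm\Ret$ that were singled out just before the statement: indeed $\Icell\,\comon = (\Icell\,\Ccol)\,\Ret = \idm\Ccol\,\Ret = \idm\comon$, and symmetrically $\comon\,\Icell = \Ccol\,(\Ret\,\Icell) = \Ccol\,\idm\Ret = \idm\comon$.

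With all hypotheses in place, Proposition~\ref{prop:comonlsa} applies and yields a left-semi $\Smon$-algebra whose underlying $1$-cell is $\comon\,\SDmap = \Ccol\,\Ret\,\SDmap$ and whose counit $2$-cell is the composite $\Icell : \Ccol\,\Ret\,\SDmap\,\Sunit \Rightarrow \id_{\Dmon X}$; the expression $\Ccol\,\Ret\,\SDmap\,\Sunit$ reduces to $\Ccol\,\Ret$ by the unit law $\SDmap\,\Sunit = \id_{\Dmon X}$ for the $\Smon$-algebra $(\Dmon X, \SDmap)$, matching the statement verbatim. The only real obstacle is bookkeeping --- aligning the template of Proposition~\ref{prop:comonlsa} with the concrete retraction $\Ccol \dashv \Ret$ produced by the colax colimit --- but no new calculation is needed beyond the whiskering identities recorded above.
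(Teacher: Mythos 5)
Your proposal is correct and follows exactly the route the paper takes: the text preceding the proposition establishes that $\comon=\Ccol\,\Ret$ is strictly idempotent and that $\Icell:\comon\Rightarrow\id_{\Dmon X}$ is a strictly idempotent comonad structure in $\Algs\Smon$ (via $\Icell.\Ccol=\idm\Ccol$ and $\Ret.\Icell=\idm\Ret$), and then simply invokes Proposition~\ref{prop:comonlsa}. The only cosmetic quibble is your use of $\Ccol\dashv\Ret$ for what is a retraction $\Ret\,\Ccol=\id_{\Cmon X}$, not an adjunction.
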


\begin{proposition}\label{prop:Dstruct2}
Assume $\CDmap$ denotes the map $\Cmon\Dmon X\xrightarrow{\Cmon \Ret}\Cmon^2 X\xrightarrow\Cmult\Cmon X\xrightarrow\Ccol\Dmon X$. Then $\Dmon X$ together with $\CDmap$ and $\CDmap\Cunit=\Ccol\Ret\xRightarrow\Icell \id_{\Dmon X}$ is a left-semi $\Cmon$-algebra.
\end{proposition}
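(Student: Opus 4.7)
The plan is to verify Definition~2.1 directly for the pair $(\CDmap,\Icell)$. We cannot mimic the proof of Proposition~\ref{prop:Dstruct1} by appealing to Proposition~\ref{prop:comonlsa}, because $\Dmon X$ does not carry a strict $\Cmon$-algebra structure; only the $\Smon$-algebra structure $\SDmap$ is available. The essential inputs are the cocone identities established in Subsection~\ref{subsec:Qlsa}, namely $\Ret\,\Ccol=\id_{\Cmon X}$, $\Icell\,\Ccol=\idm\Ccol$ and $\Ret\,\Icell=\idm\Ret$, combined with the monad structure of $\Cmon$. As a preliminary boundary check, naturality of $\Cunit$ followed by the left unit law of $\Cmon$ gives $\CDmap\,\Cunit_{\Dmon X}=\Ccol\,\Cmult_X\,\Cunit_{\Cmon X}\,\Ret=\Ccol\,\Ret$, so $\Icell$ indeed has the required source $\CDmap\,\Cunit_{\Dmon X}$.

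For the strict associativity~\eqref{semialg1com} I would expand
\[
\CDmap\,\Cmon\CDmap\;=\;\Ccol\,\Cmult_X\,\Cmon\Ret\,\Cmon\Ccol\,\Cmon\Cmult_X\,\Cmon^2\Ret.
\]
Functoriality of $\Cmon$ applied to $\Ret\,\Ccol=\id_{\Cmon X}$ collapses the middle factor $\Cmon\Ret\,\Cmon\Ccol$ to the identity. Associativity of $\Cmult$ then rewrites $\Cmult_X\,\Cmon\Cmult_X=\Cmult_X\,\Cmult_{\Cmon X}$, and naturality of $\Cmult$ applied to $\Ret$ slides $\Cmult_{\Cmon X}$ past $\Cmon^2\Ret$ to yield $\Cmon\Ret\,\Cmult_{\Dmon X}$; the resulting expression is exactly $\CDmap\,\Cmult_{\Dmon X}$.

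The remaining task---and the main obstacle---is the 2-cell coherence~\eqref{semialg2com}. The strategy is to reduce both sides of the required pasting to a common form by systematically contracting occurrences of $\Icell$ using the cocone identities. Where $\Cmon\Icell$ is whiskered with $\Cmon\Ret$, functoriality of $\Cmon$ applied to $\Ret\,\Icell=\idm\Ret$ turns the subdiagram into an identity; where $\Icell$ meets $\Ccol$, the identity $\Icell\,\Ccol=\idm\Ccol$ does the same. The residual $1$-cell portions are matched using the unit laws of $\Cmon$ (for any $\Cmult\,\Cunit_{\Cmon}$ that appears) and naturality of $\Cunit$ across $\Ret$, exactly as in the boundary check above. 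The subtlety---and the reason Proposition~\ref{prop:comonlsa} cannot be invoked---is that each rewrite must remain within the $\Smon$-algebra world; the cocone identities for $\Icell$ are designed precisely so that no $\Cmon$-algebra law on $\Dmon X$ is ever required.
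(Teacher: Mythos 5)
Your proof is correct and follows essentially the same route as the paper: the $1$-cell law reduces to $\Ret\,\Ccol=\id_{\Cmon X}$ together with the monad laws for $\Cmon$, and the $2$-cell law is exactly the paper's stated reliance on $\Icell.\Ccol=\idm\Ccol$ and $\Ret.\Icell=\idm\Ret$, which collapse the two whiskerings of $\Icell$ to $\idm\CDmap$. Your observation that Proposition~\ref{prop:comonlsa} is not applicable here (since $\Dmon X$ carries no strict $\Cmon$-algebra structure) is also accurate and explains why a direct verification is needed.
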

\begin{proof}[Proof sketch]
The $2$-cell property relies on $\Icell.\Ccol=\idm\Ccol$ and $\Ret.\Icell=\idm\Ret$.
\end{proof}
As $\Mmap$ is a map of $2$-monads, it is a left-semi monad map. We apply Proposition~\ref{prop:mapmonlsa} and get
\begin{proposition}\label{prop:Dstruct3}
   $\Smon\Dmon X\xrightarrow{\Mmap\Dmon}\Cmon \Dmon X \xrightarrow{\Cmon\Ccol} \Cmon^2 X\xrightarrow \Cmult \Cmon X\xrightarrow\Ccol\Dmon X$ together with the $2$-cell $\Icell:\CDmap\,(\Mmap\Dmon)\Sunit=\Ccol\Ret\Rightarrow \id_{\Dmon X}$ is a left-semi $\Smon$-algebra.
\end{proposition}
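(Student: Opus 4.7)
The plan is to obtain this by direct appeal to Proposition~\ref{prop:mapmonlsa}, instantiated at the left-semi $\Cmon$-algebra supplied by Proposition~\ref{prop:Dstruct2} and using the given map of $2$-monads $\Mmap:\Smon\to\Cmon$ in the role of the left-semi monad map.

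First I would note that, since $\Mmap$ is a \emph{strict} map of $2$-monads, the equalities $\Mmap.\Sunit=\Cunit$ and the multiplication compatibility hold on the nose; hence taking $\gamma=\idm{\Cunit}$ as the required $2$-cell $\Mmap.\Sunit\Rightarrow\Cunit$, the data $(\Mmap,\idm{\Cunit})$ trivially satisfies conditions~\eqref{semialg5com}, \eqref{semialg6com} and~\eqref{semialg7com}, so $\Mmap$ is a fortiori a left-semi monad map.

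Next I would apply Proposition~\ref{prop:mapmonlsa} with $\monS=\Smon$, $\mon=\Cmon$, $\lambda=\Mmap$, $\gamma=\idm{\Cunit}$, $Z=\Dmon X$, $z=\CDmap$ and $\epsilon=\Icell$; the hypotheses of the proposition are precisely the content of Proposition~\ref{prop:Dstruct2} together with the observation just made. The conclusion produces the left-semi $\Smon$-algebra with action $\CDmap\circ\Mmap_{\Dmon X}$, which after expanding $\CDmap=\Ccol\circ\Cmult\circ\Cmon\Ret$ is exactly the $1$-cell displayed in the statement, and with structural $2$-cell $\Icell\cdot\gamma=\Icell\cdot\idm{\Cunit}=\Icell$.

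It remains only to check that the stated $2$-cell has the boundary claimed, i.e. that $\CDmap\,(\Mmap\Dmon)\Sunit=\Ccol\Ret$. This is a short calculation using naturality of $\Sunit$ together with $\Mmap.\Sunit=\Cunit$ and the $\Cmon$-unit law $\Cmult.\Cunit=\id$, which collapses the composite $\Ccol\Cmult\Cmon\Ret\Mmap\Sunit$ to $\Ccol\Ret$. There is no substantive obstacle here: the whole argument is a bookkeeping exercise made possible by having already recorded Propositions~\ref{prop:Dstruct2} and~\ref{prop:mapmonlsa}, and the only thing one might pause over is verifying that the strict $2$-monad map $\Mmap$ really does satisfy the left-semi monad map axioms with $\gamma$ an identity, which follows immediately from strictness.
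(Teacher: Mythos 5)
Your proposal is correct and is exactly the paper's argument: the paper derives Proposition~\ref{prop:Dstruct3} by observing that the strict monad map $\Mmap$ is in particular a left-semi monad map (with identity unit $2$-cell) and then applying Proposition~\ref{prop:mapmonlsa} to the left-semi $\Cmon$-algebra of Proposition~\ref{prop:Dstruct2}. Your additional boundary check that $\CDmap\,(\Mmap\Dmon)\Sunit=\Ccol\Ret$ is a harmless elaboration of the same bookkeeping.
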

The following is an immediate consequence of the definitions.
\begin{proposition}
  The left-semi $\Smon$-algebras of Proposition~\ref{prop:Dstruct1} and~\ref{prop:Dstruct3} are equal.
\end{proposition}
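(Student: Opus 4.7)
The plan is to verify separately that the underlying $1$-cells and the unit $2$-cells of the two left-semi $\Smon$-algebras coincide; since both are obtained from a common $2$-cell $\Icell:\Ccol\,\Ret\Rightarrow\id_{\Dmon X}$, the equality should ultimately reduce to a naturality square for $\Mmap$.

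For the $1$-cell part, the structure map from Proposition~\ref{prop:Dstruct1} is $\Ccol\,\Ret\,\SDmap$, while the one coming from Proposition~\ref{prop:Dstruct3} is $\Ccol\,\Cmult\,\Cmon\Ret\,\Mmap_{\Dmon X}$. The first observation is that $\Ret$ is an $\Smon$-algebra map from $(\Dmon X,\SDmap)$ to the restricted $\Smon$-algebra $(\Cmon X,\Cmult\,\Mmap_{\Cmon X})$ obtained from the $\Cmon$-algebra $(\Cmon X,\Cmult)$ via $\lift\Mmap$. This yields $\Ret\,\SDmap=\Cmult\,\Mmap_{\Cmon X}\,\Smon\Ret$. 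Whiskering with $\Ccol$ gives $\Ccol\,\Ret\,\SDmap=\Ccol\,\Cmult\,\Mmap_{\Cmon X}\,\Smon\Ret$, so it suffices to apply the naturality of $\Mmap:\Smon\to\Cmon$ at the $1$-cell $\Ret:\Dmon X\to\Cmon X$, namely $\Mmap_{\Cmon X}\,\Smon\Ret=\Cmon\Ret\,\Mmap_{\Dmon X}$, to conclude.

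For the $2$-cell part, both unit cells are, after unfolding the constructions of Propositions~\ref{prop:comonlsa} and~\ref{prop:mapmonlsa}, expressible as the same $\Icell:\Ccol\,\Ret\Rightarrow \id_{\Dmon X}$. On the side of Proposition~\ref{prop:Dstruct1}, the recipe of Proposition~\ref{prop:comonlsa} gives $\Icell:\Ccol\,\Ret\,\SDmap\,\Sunit\Rightarrow\id_{\Dmon X}$, which collapses via the unit law $\SDmap\,\Sunit=\id_{\Dmon X}$ to $\Icell:\Ccol\,\Ret\Rightarrow\id_{\Dmon X}$. On the side of Proposition~\ref{prop:Dstruct3}, since $\Mmap$ is a strict map of $2$-monads its comparison $2$-cell $\gamma$ is the identity, so Proposition~\ref{prop:mapmonlsa} yields $\Icell:\CDmap\,\Mmap_{\Dmon X}\,\Sunit\Rightarrow\id_{\Dmon X}$; using $\Mmap\,\Sunit=\Cunit$ and the $\Cmon$-unit law to simplify $\Ccol\,\Cmult\,\Cmon\Ret\,\Cunit_{\Dmon X}=\Ccol\,\Ret$, this again becomes $\Icell:\Ccol\,\Ret\Rightarrow\id_{\Dmon X}$.

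There is no real obstacle here; the whole argument is a formal unfolding. The only care required is to track the simplifications inside Propositions~\ref{prop:comonlsa} and~\ref{prop:mapmonlsa} so that the two unit $2$-cells are presented with literally the same domain and codomain, at which point the matter reduces to the single naturality square for $\Mmap$ at $\Ret$ whiskered by $\Ccol$ and $\Cmult$.
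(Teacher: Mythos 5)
Your proof is correct and amounts to exactly the definitional unfolding the paper has in mind when it calls the proposition ``an immediate consequence of the definitions'': the $1$-cell equality follows from $\Ret$ being a strict $\Smon$-algebra map into $\lift\Mmap(\Cmon X,\Cmult)$ together with naturality of $\Mmap$ at $\Ret$, and both unit $2$-cells collapse to the same $\Icell:\Ccol\,\Ret\Rightarrow\id_{\Dmon X}$. No discrepancy with the paper's (implicit) argument.
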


Let us recap the properties of $\Dmon X$. It is equipped with an $\Smon$-algebra structure $\SDmap$ and a left-semi $\Cmon$-algebra structure $\CDmap$ whose $2$-cell $\Icell$ lies in $\Algs\Smon$ and such that  the two resulting left-semi $\Smon$-algebra structures coincide.

In order to prove that $\Dmon$ is a $2$-monad (Theorem~\ref{th:Qmonad}) and that these properties characterise $\Dmon$-algebras (Theorem~\ref{th:characterisation}), we encapsulate the structure in a $2$-category. Given this structure on a general object $X$, we can build a map $\Dmon X\to X$ in a sufficiently functorial way that both theorems follow. What we need is the $1$-cell and $2$-cell aspects associated to these properties.

\subsection{The Structure category}
\label{subsec:struc}

Let us define the Structure category $\struc$
\begin{itemize}
\item an object  of $\struc$ consists of an object $X$ of $\cat\cK$ equipped with
  \begin{itemize}
  \item the structure  $\Smon X\xrightarrow w X$ of an $\Smon$-algebra
  \item the structure $\Cmon X\xrightarrow z X$, $\epsilon:z\,\Cunit= \comon\Rightarrow\id_X$ of a left-semi $\Cmon$-algebra
  \end{itemize}
  such that
  \begin{itemize}
  \item $\comon$ is an endomap of the $\Smon$-algebra $\Smon X\xrightarrow w X$ and $\epsilon$ is an $\Smon$-algebra $2$-cell
  \item the two induced left-semi $\Smon$-algebra structures, with structure maps $\Smon X\xrightarrow w X  \xrightarrow \comon X$ and $\Smon X\xrightarrow \Mmap \Cmon X\xrightarrow z X$, are equal
  \end{itemize}
\item a map in $\struc$ between objects $X$ and $X'$ equipped as above is a map $p:X\to X'$ in $\cat\cK$ which is both an $\Smon$-algebra map and a left-semi $\Cmon$-algebra map
\item a $2$-cell between two such maps $p$ and $q$ is a $2$-cell $p\Rightarrow p'$ which is both an $\Smon$-algebra and a left-semi $\Cmon$-algebra $2$-cell.
\end{itemize}

\begin{remark}
  \begin{enumerate}
  \item In the definition, the condition regarding the left-semi $\Smon$-algebra structures amounts to the claim that $\comon\,w=z\,\Mmap$. The equality of the $2$-cells is then automatic
  \item It is a consequence of the definition that $z:\Cmon X\to X$ is a map of $\Smon$-algebras. Indeed, if we consider the three following conditions, any two of them implies the third.
    \begin{itemize}
    \item $\comon$ is an endomap of $\Smon$-algebras, 
    \item $\comon\,w=\Mmap\, z$ 
    \item $z$ is a map of $\Smon$-algebras 
    \end{itemize}
  \end{enumerate}
\end{remark}

\begin{proposition}\label{prop:Dstruct}
  $\Dmon X$ together with  $\SDmap$, $\CDmap$ and $\Dcell$ is an object in $\struc$.
\end{proposition}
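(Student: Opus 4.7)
The plan is to identify the data in the statement with the data required by the definition of $\struc$, and verify each clause by assembling the results of Subsection~\ref{subsec:Qlsa}. With $w := \SDmap$, $z := \CDmap$ and counit $\epsilon := \Icell$ (so that $\comon = z\,\Cunit = \Ccol\,\Ret$), the $\Dcell$ named in the statement is equivalent to $\Icell$ as primary datum: $\Icell$ was constructed from the pair $(\idm\Ccol, \Dcell)$ by two-dimensional universality and is characterised by $\Icell.\Scol = \Dcell$ and $\Icell.\Ccol = \idm\Ccol$. With this dictionary, the $\Smon$-algebra structure $(\Dmon X, \SDmap)$ is immediate from the construction of $\Dmon X$ as a colax colimit in $\Algs\Smon$, and Proposition~\ref{prop:Dstruct2} supplies the left-semi $\Cmon$-algebra $(\CDmap, \Icell)$.

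It then remains to discharge the two linking clauses of the definition of $\struc$. The map $\comon = \Ccol\,\Ret$ is an endomap of the $\Smon$-algebra $(\Dmon X, \SDmap)$ because both of its factors are $\Smon$-algebra maps: $\Ccol$ as a leg of the colax cocone in $\Algs\Smon$, and $\Ret$ by the one-dimensional universality applied to the identity cocone in~\eqref{eq:cons3}. The $2$-cell $\Icell : \comon \Rightarrow \id_{\Dmon X}$ is an $\Smon$-algebra $2$-cell by its very construction via two-dimensional universality of the colimit in $\Algs\Smon$. Finally, the equality of the two induced left-semi $\Smon$-algebra structures on $\Dmon X$ is the content of the immediately preceding proposition, which identifies the structures produced by Propositions~\ref{prop:Dstruct1} and~\ref{prop:Dstruct3}; equivalently, by the remark following the definition of $\struc$, it would suffice to verify the $1$-cell identity $\comon\,\SDmap = \CDmap\,\Mmap_{\Dmon X}$, which reduces to the $\Smon$-algebra-map axiom for $\Ret$ combined with the naturality of $\Mmap$.

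The proof is therefore a compilation with no substantial obstacle; the one point deserving real care is the bookkeeping between the colimit presentation of $\Dmon X$ (whose primitive $2$-cell is the colax-cocone datum $\Dcell$) and the structural presentation in $\struc$ (whose primitive $2$-cell is the counit $\epsilon = \Icell$), which is mediated throughout by the universal identities $\Icell.\Scol=\Dcell$ and $\Icell.\Ccol=\idm\Ccol$.
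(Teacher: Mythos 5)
Your proposal is correct and takes essentially the same route as the paper, which states this proposition without proof as a compilation of Propositions~\ref{prop:Dstruct1}--\ref{prop:Dstruct3} and the equality of the two induced left-semi $\Smon$-algebra structures, exactly as you assemble them. Your care over the dictionary between the cocone datum $\Dcell$ and the counit $\Icell$ (via $\Icell.\Scol=\Dcell$ and $\Icell.\Ccol=\idm\Ccol$) matches the paper's own recap preceding the definition of $\struc$.
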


Assume  $X$ together with $w$, $z$, and $\epsilon$ is an object in $\struc$. Then we define $\Dmon X\xrightarrow x X$ to be the unique $\Algs\Smon$ map arising from the colax cocone
\begin{equation}\label{eq:cocone}
  \tz{struct1}=\tz{struct2}
\end{equation}
\begin{proposition}\label{prop:key}
  Assume  $X$ together with $w$, $z$, and $\epsilon$ is an object in $\struc$ and $x$ denotes the associated map. Then $x:\Dmon X\to X$ is a map in $\struc$ which is natural in $X$.
\end{proposition}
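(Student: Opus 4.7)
The plan is to verify the two membership conditions for $\struc$ on $x$---that it is both an $\Smon$-algebra map and a left-semi $\Cmon$-algebra map---and then to establish naturality in $X$. By construction, $x$ is produced by applying the $1$-cell universal property of the colax colimit $\Dmon X$ in $\Algs\Smon$ to the cocone $(w,z,\epsilon w)$ on $X$. This cocone is well-formed: $w$ is the $\Smon$-algebra on $X$; $z$ is an $\Smon$-algebra map by the remark following the definition of $\struc$; and $\epsilon w:z\Mmap=\comon w\Rightarrow w$ is an $\Smon$-algebra $2$-cell because $\epsilon$ is. Hence $x\Scol=w$, $x\Ccol=z$, $x\Dcell=\epsilon w$, and $x$ is automatically an $\Smon$-algebra map.

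The substance of the proof is verifying that $x$ is a left-semi $\Cmon$-algebra map in the sense of~\eqref{semialg3com} and~\eqref{semialg4com}. For the $1$-cell equation $x\CDmap=z\Cmon x$, I would unfold $\CDmap=\Ccol\Cmult\Cmon\Ret$ and use $x\Ccol=z$ together with the strict $\Cmon$-associativity $z\Cmult=z\Cmon z$ to rewrite the left side as $z\Cmon(z\Ret)$. The key intermediate step is then to show $z\Ret=\comon x$ as $1$-cells $\Dmon X\to X$ via $1$-cell universality of $\Dmon X$: both sides are $\Smon$-algebra maps, and they agree on each cocone leg --- on $\Scol$ both give $\comon w$; on $\Ccol$ both give $z$, using the derivation $\comon z=z\Cunit z=z\Cmon z\cdot\Cunit_{\Cmon X}=z\Cmult\cdot\Cunit_{\Cmon X}=z$; and on $\Dcell$ both give $\idm(\comon w)$, using the law $\comon\epsilon=\idm\comon$ of the strictly idempotent comonad $\comon$. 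Substituting back and using the parallel identity $z\Cmon\comon=z\Cmult\cdot\Cmon\Cunit=z$ yields $z\Cmon(z\Ret)=z\Cmon(\comon x)=z\Cmon\comon\cdot\Cmon x=z\Cmon x$, as required. For the $2$-cell equation, both $x\Icell$ and $\epsilon x$ are $2$-cells $z\Ret\Rightarrow x$, and their whiskerings with $\Scol$ and $\Ccol$ agree: with $\Scol$ both give $\epsilon w$ (using $\Icell\Scol=\Dcell$ and $x\Dcell=\epsilon w$), and with $\Ccol$ both give $\idm z$ (using $\Icell\Ccol=\idm\Ccol$ and the coherence $\epsilon z=\idm z$ coming from the left-semi structure on $X$); $2$-cell universality then promotes this to the desired equality.

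For naturality in $X$: given $f:X\to X'$ a map in $\struc$, both $f\cdot x$ and $x'\cdot\Dmon f$ are $\Smon$-algebra maps $\Dmon X\to X'$, and the comparison on cocone components reduces to the three structural preservation conditions defining a map in $\struc$, namely $f\cdot w=w'\cdot\Smon f$, $f\cdot z=z'\cdot\Cmon f$, and $f\cdot\epsilon=\epsilon'\cdot f$; the $1$-cell universality of $\Dmon X$ then closes the argument. The main obstacle I anticipate is the $1$-cell identification $z\Ret=\comon x$ underpinning the left-semi $\Cmon$-algebra check: once that equality is obtained by universality, combined with strict $\Cmon$-associativity and the strictly idempotent comonad law $\comon\epsilon=\idm\comon$, the remainder of the proof reduces to routine manipulation of the defining identities.
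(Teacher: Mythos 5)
Your proof is correct and follows the same strategy as the paper: every required equality is obtained from the $1$- and $2$-dimensional universality of the colax colimit by checking agreement on the cocone legs $\Scol$, $\Ccol$ and $\Dcell$, with the intermediate identification $z\,\Ret=\comon\,x$ and the left-semi unit axiom $\epsilon.z=\idm z$ doing the real work; you in fact supply considerably more detail than the paper's sketch. The one omission is the $2$-dimensional aspect of naturality: since $\struc$ is a $2$-category, ``natural in $X$'' also requires that for a $2$-cell $p\Rightarrow q$ in $\struc$ the two induced $2$-cells $\Dmon X\to X'$ agree, which is precisely the part the paper's own sketch highlights; it follows by the same $2$-cell universality argument you already use for the left-semi $2$-cell condition, so you should add that sentence.
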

\begin{proof}[Sketch proof]
 Assume  $X'$ together with $w'$, $z'$, $\epsilon'$ in $\struc$ associated with  $x'$ and 
 $p\xRightarrow \rho q$ a $2$-cell in $\struc$. Then $\tz{struct4}$ by $2$-cell universality.
\end{proof}

\subsection{The colimit is a monad}
\label{subsec:Qmon}

As $\Dmon X$ is an object in $\struc$ (Proposition~\ref{prop:Dstruct}), the induced map $\Dmon^2X\xrightarrow\Dmult \Dmon X$ is a map in $\struc$ (Proposition~\ref{prop:key}).

Assume  $(X,w,z,\epsilon)$ in $\struc$. Then the induced map $\Dmon X\xrightarrow x\Dmon X$ is a map in $\struc$. We apply the $1$-cell part of the naturality (Proposition~\ref{prop:key}) with $p=x$ and $x'=\Dmult$ and get
\[\tz{struct5}\qquad \text{in particular, setting $x=\Dmult$}\qquad\tz{struct6}\]

\begin{theorem}
  \label{th:Qmonad}
  $\Dmon$ is a $2$-monad with multiplication $\Dmult$ and unit $X\xrightarrow \Sunit \Smon X\xrightarrow \Scol \Dmon X$.
\end{theorem}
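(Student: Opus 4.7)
The plan is to assemble the $2$-monad data for $\Dmon$ from the structure already developed---the $\struc$-structure on $\Dmon X$ (Proposition~\ref{prop:Dstruct}) and the induced map $x:\Dmon X\to X$ provided by Proposition~\ref{prop:key}---using the $1$- and $2$-cell universality of the defining colax colimit throughout.

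First, I would record the $2$-functoriality of $\Dmon$ on $\cat\cK$, obtained by composing the $2$-functor $X\mapsto(\Smon X,\Smult)$ into $\Algs\Smon$ with Proposition~\ref{prop1:def}; preservation of identities and composites is checked by universality against the cocone data $(\Scol,\Ccol,\Dcell)$. Next, $2$-naturality of the unit $\eta_X=\Scol\,\Sunit$ follows from naturality of $\Sunit$ combined with the $1$- and $2$-cell naturality of $\Scol$ built into the construction of $\Dmon h$. For $2$-naturality of $\Dmult$ in $X$, I invoke $1$-cell universality: precomposing $\Dmon h\,\Dmult_X$ and $\Dmult_Y\,\Dmon^2 h$ with $\Scol_{\Dmon X}$ and $\Ccol_{\Dmon X}$ reduces matters to compatibility of $\Dmon h$ with $\SDmap$ (immediate, since $\Dmon h$ is an $\Smon$-algebra map) and with $\CDmap$; the latter unwinds, via $\CDmap=\Ccol\,\Cmult\,\Cmon\Ret$, to naturality of $\Ret$ in $X$, itself provable by $1$-cell universality against the cocone defining $\Ret$ in~\eqref{eq:cons3}.

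For associativity, $\Dmon X$ lies in $\struc$ (Proposition~\ref{prop:Dstruct}) with associated map $\Dmult$, so $\Dmult$ is itself a map in $\struc$ by Proposition~\ref{prop:key}; applying the naturality clause of that proposition with $p=\Dmult$ yields the required identity $\Dmult\circ\Dmon\Dmult=\Dmult\circ\Dmult_{\Dmon X}$, which is the diagram $\tz{struct6}$ displayed just before the theorem. For the first unit law $\Dmult\circ\eta_{\Dmon X}=\id_{\Dmon X}$, unfolding $\eta_{\Dmon X}=\Scol_{\Dmon X}\,\Sunit_{\Dmon X}$ and using~\eqref{eq:cocone}---which yields $\Dmult\circ\Scol_{\Dmon X}=\SDmap$---reduces the claim to the $\Smon$-algebra unit law. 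For the second unit law $\Dmult\circ\Dmon\eta_X=\id_{\Dmon X}$, I again appeal to $1$-cell universality: precomposing with $\Scol_X$ uses that $\Scol_X$ is an $\Smon$-algebra map from the free algebra together with the $\Smon$ unit law; precomposing with $\Ccol_X$ uses the identity $\Ret\,\Scol=\Mmap$ from~\eqref{eq:cons3}, the monad-map relation $\Mmap\,\Sunit=\Cunit$, and the $\Cmon$ unit law; the analogous chase on $\Dcell_X$ followed by $2$-cell universality closes the argument.

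The main technical obstacle will be the $2$-cell side of the second unit law together with the $2$-naturality of $\Dmult$: both require propagating the colimit $2$-cell $\Dcell$ through the composite formulae defining $\CDmap$ and $\Icell$, but in each case the calculation reduces cleanly to the monad axioms of $\Smon$ and $\Cmon$ together with the identities recorded in~\eqref{eq:cons3}.
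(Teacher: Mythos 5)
Your proposal is correct and takes essentially the same route as the paper: associativity is exactly the paper's application of Proposition~\ref{prop:key} with $p=x$ and then $x=\Dmult$, giving $\Dmult\circ\Dmon\Dmult=\Dmult\circ\Dmult_{\Dmon X}$, and your derivation of the unit laws from the cocone equality~\eqref{eq:cocone}, the relations in~\eqref{eq:cons3}, and the monad-map law $\Mmap\,\Sunit=\Cunit$ is the intended completion of the paper's sketch. One small terminological nitpick: the final step of the second unit law is the \emph{uniqueness} clause of the $1$-dimensional universal property applied to the whole colax cocone (including its $2$-cell component), rather than the $2$-cell universality per se.
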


\begin{proposition}\label{prop:monadmap}
  $\Smon \xrightarrow\Scol\Dmon$ is a map of monads.
\end{proposition}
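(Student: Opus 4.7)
The plan is to verify the three conditions defining a map of $2$-monads: $2$-naturality of $\Scol$ as a transformation $\Smon \Rightarrow \Dmon$, compatibility with the units, and compatibility with the multiplications. Two of these will be essentially formal, and the content is concentrated in the third.

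I would first dispose of naturality and the unit axiom. The $2$-naturality of $\Scol$ is built into Proposition~\ref{prop1:def}: the action of $\Dmon$ on $1$-cells and $2$-cells is by construction the unique $1$- and $2$-cell mediating the whiskered colax cocones, and the $\Scol$-leg of those cocones is part of that very definition. Hence $\Dmon f \circ \Scol_X = \Scol_Y \circ \Smon f$ and the analogous $2$-cell identity hold by construction. The unit law is tautological: by Theorem~\ref{th:Qmonad}, the unit of $\Dmon$ at $X$ is defined as $\Scol_X \circ \Sunit_X$, so the triangle comparing units commutes on the nose.

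The substantive check is the multiplication square, namely $\Scol_X \circ \Smult_X = \Dmult_X \circ \Dmon \Scol_X \circ \Scol_{\Smon X}$. Using the naturality of $\Scol$ just recorded, I would rewrite the right-hand side as $\Dmult_X \circ \Scol_{\Dmon X} \circ \Smon \Scol_X$ and then combine two ingredients: first, applying the defining cocone equation~\eqref{eq:cocone} to the object $(\Dmon X, \SDmap, \CDmap, \Icell)$ of $\struc$ from Proposition~\ref{prop:Dstruct}, the associated map $\Dmult_X$ satisfies $\Dmult_X \circ \Scol_{\Dmon X} = \SDmap$; second, since the colax cocone $(\Scol, \Ccol, \Dcell)$ defining $\Dmon X$ lives in $\Algs\Smon$, the $1$-cell $\Scol_X$ is by construction a strict $\Smon$-algebra map from $(\Smon X, \Smult_X)$ to $(\Dmon X, \SDmap)$, so $\SDmap \circ \Smon \Scol_X = \Scol_X \circ \Smult_X$. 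Concatenating the two identities yields exactly the required equation.

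I do not anticipate a genuine obstacle, since the real work has already been done in Theorem~\ref{th:Qmonad}: the multiplication law essentially records that $\Dmult$ restricted along the $\Smon$-leg of the universal cocone recovers the original $\Smon$-algebra structure on $\Dmon X$. The only point deserving care is the order of steps, namely using naturality to convert the whiskering $\Dmon \Scol_X$ into $\Scol_{\Dmon X} \circ \Smon \Scol_X$ \emph{before} invoking the cocone identity for $\Dmult_X$; after that the verification is purely diagrammatic.
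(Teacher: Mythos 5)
Your argument is correct and is essentially the paper's own proof: the unit law holds by the definition of $\Dunit$, and the multiplication square follows by combining the cocone identity $\Dmult\,\Scol_{\Dmon X}=\SDmap$ from~\eqref{eq:cocone} with the fact that $\Scol_X$ is a strict $\Smon$-algebra map out of the free algebra, exactly as in the paper's sketch. The only difference is that you spell out the naturality step converting $\Dmon\Scol_X\circ\Scol_{\Smon X}$ into $\Scol_{\Dmon X}\circ\Smon\Scol_X$, which the paper leaves implicit.
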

\begin{proof}[Proof sketch]
  The unit aspect is by definition of $\Dunit$. As $\Scol$ is a map of $\Smon$-algebra and   $\Dmult\,\Scol=\SDmap$ by cocone equality~\eqref{eq:cocone}, we get the multiplication diagram.
\end{proof}

\begin{proposition}\label{prop:lsmonadmap}
  $\Cmon \xrightarrow\Ccol\Dmon$ is a left-semi map of monads.
\end{proposition}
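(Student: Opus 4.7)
The plan is to exhibit the 2-cell data and then reduce the coherence axioms to equations already established in Section~\ref{subsec:Qlsa}. Since $\Mmap$ is a strict map of 2-monads, we have $\Cunit = \Mmap\,\Sunit$, so $\Ccol\,\Cunit = \Ccol\,\Mmap\,\Sunit$. Whiskering the defining 2-cell $\Dcell:\Ccol\,\Mmap\Rightarrow\Scol$ with $\Sunit$ yields the natural candidate
\[
\gamma \;:=\; \Dcell\,\Sunit \;:\; \Ccol\,\Cunit \;\Longrightarrow\; \Scol\,\Sunit \;=\; \Dunit.
\]

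For the 1-cell multiplication square \eqref{semialg5com}, I would use the definition $\CDmap = \Ccol\,\Cmult\,\Cmon\Ret$ from Proposition~\ref{prop:Dstruct2} together with the cocone equality $\Dmult\,\Ccol_{\Dmon X} = \CDmap_X$, which is part of the data defining $\Dmult$ via Proposition~\ref{prop:key} applied to the $\struc$-object of Proposition~\ref{prop:Dstruct}. Precomposing with $\Cmon\Ccol$ and using the identity $\Ret\,\Ccol = \id_{\Cmon X}$ from~\eqref{eq:cons3} collapses $\Cmon\Ret\,\Cmon\Ccol$ to the identity, leaving $\Dmult\,\Ccol_{\Dmon X}\,\Cmon\Ccol_X = \Ccol_X\,\Cmult_X$, which is precisely the required multiplication compatibility for a monad map.

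For the 2-cell coherences \eqref{semialg6com} and \eqref{semialg7com} involving $\gamma$, the key observation is that by the defining equation $\Icell\,\Scol = \Dcell$ from Section~\ref{subsec:Qlsa}, whiskering with $\Sunit$ gives
\[
\gamma \;=\; \Icell\,\Scol\,\Sunit \;=\; \Icell\,\Dunit,
\]
which identifies $\gamma$ with the counit of the idempotent comonad $\comon = \Ccol\Ret$ whiskered with the unit $\Dunit$. The unit-level coherence then reduces, via naturality of $\Icell$ and the identity $\Ret\,\Icell = \idm\Ret$, to the left-semi $\Cmon$-algebra axioms for $(\Dmon X, \CDmap, \Icell)$ established in Proposition~\ref{prop:Dstruct2}, while the multiplication-level coherence reduces to the compatibility of $\Icell$ with $\Cmon\Ret$ and $\Cmult$, which is again part of Proposition~\ref{prop:Dstruct2}.

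The main obstacle is purely bookkeeping: tracking the four distinct whiskerings (by $\Sunit$, $\Mmap$, $\Cmon$, and $\Ret$) through the definition of $\CDmap$ to witness that the abstract left-semi monad map axioms reduce to the already-verified equations $\Icell\,\Ccol = \idm\Ccol$, $\Icell\,\Scol = \Dcell$, $\Ret\,\Icell = \idm\Ret$, and the cocone equalities. No essentially new calculation is expected, but the diagram-pasting must be done carefully to keep boundaries matching at each step.
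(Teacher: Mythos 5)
Your proposal matches the paper's proof essentially step for step: the same $\gamma=\Dcell\,\Sunit=\Icell\,\Dunit$, the same verification of the strict multiplication square via the cocone equality $\Dmult\,\Ccol=\CDmap$ and $\Ret\,\Ccol=\id$, and the same reduction of the $2$-cell coherences to $\Icell\,\Ccol=\idm\Ccol$, $\Dcell=\Icell\,\Scol$, $\Ret\,\Icell=\idm\Ret$ and the cocone equalities. The only discrepancy is cosmetic: you have swapped which of \eqref{semialg5com} and \eqref{semialg7com} is the multiplication square versus the unit $2$-cell condition.
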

\begin{proof}[Proof sketch]
Recall that  $\Ret\Ccol=\id$ and that   $\Dmult\,(\Ccol\Dmon)=\CDmap$ by cocone equality~\eqref{eq:cocone}.  Then,  the multiplication diagram~\eqref{semialg7com} follows since $\Dmult\,(\Ccol\Dmon)\,(\Smon\Ccol)=z\,(\Smon\Ccol)=\Ccol\,\Cmult\,(\Cmon\Ret)\,(\Cmon\Ccol)=\Ccol\,\Cmult$.

We define the unit $2$-cell $\gamma:\Ccol\,\Cunit\Rightarrow \Dunit$ in~\eqref{semialg5com} as
\[\tz{lsunit1}\]
We prove Equalities~\eqref{semialg6com}. Recall that $\Dcell=\Icell .\Scol$ and $\Icell.\Ccol=\idm\Ccol$.
 As $\Dmult\,(\Ccol\Dmon)=z=\Ccol\,\Cmult\,(\Cmon\Ret)$  and  $h.\Dcell=h .\Icell .\Scol=\idm\Ccol .\Scol$
\[\tz{lsunit2}=\idm\Ccol.\]
As $\Dmult.\Dcell=\Icell.\SDmap$ (see Equality~\eqref{eq:cocone} with $x=\Dmult$), and as $\SDmap$ is an $\Smon$-algebra $u\,(\Sunit\Dmon)=\id_{\Dmon X}$ so the second $2$-cell equality follows: $\Dmult.\Dcell.(\Sunit\Dmon)\,\Ccol=\Icell.\SDmap\,(\Sunit\Dmon)\,\Ccol=\Icell.\Ccol=\idm\Ccol$.
\end{proof}

\begin{theorem}
  \label{th:characterisation}
  The $2$-category $\Algs\Dmon$ of algebras of the $2$-monad $\Dmon$  is isomorphic to the Structure category.
\end{theorem}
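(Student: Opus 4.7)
The plan is to construct mutually inverse $2$-functors $\Phi:\struc\to\Algs\Dmon$ and $\Psi:\Algs\Dmon\to\struc$, each the identity on underlying $1$-cells and $2$-cells in $\cat\cK$, so that functoriality is automatic. For $\Phi$ on objects, send $(X,w,z,\epsilon)\in\struc$ to $(X,x)$ where $x:\Dmon X\to X$ is the $1$-cell produced by Proposition~\ref{prop:key}. The unit axiom $x\,\Dunit=\id_X$ follows from the cocone equation~\eqref{eq:cocone} (which gives $x\,\Scol=w$) together with $w\,\Sunit=\id_X$. The multiplication axiom $x\,\Dmult=x\,\Dmon x$ is precisely the displayed equation just before Theorem~\ref{th:Qmonad}, obtained from the $1$-cell naturality clause of Proposition~\ref{prop:key} applied to the $\struc$-map $x:(\Dmon X,\SDmap,\CDmap,\Dcell)\to(X,w,z,\epsilon)$. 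On morphisms and $2$-cells, the same naturality statement (applied to arbitrary $\struc$-data) shows that any $\struc$-map or $2$-cell is automatically a $\Dmon$-algebra map or $2$-cell, so $\Phi$ is well-defined.

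For $\Psi$ on objects, given a strict $\Dmon$-algebra $(X,x)$, define $w=x\,\Scol$, $z=x\,\Ccol$, and $\epsilon=x.\gamma$, where $\gamma:\Ccol\,\Cunit\Rightarrow\Dunit$ is the unit $2$-cell of the left-semi monad map $\Ccol$ (Proposition~\ref{prop:lsmonadmap}). Since $\Scol:\Smon\to\Dmon$ is a monad map (Proposition~\ref{prop:monadmap}), $w$ is a strict $\Smon$-algebra structure on $X$. Since $\Ccol$ is a left-semi monad map, Proposition~\ref{prop:mapmonlsa} makes $(z,\epsilon)$ a left-semi $\Cmon$-algebra structure on $X$. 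The remaining conditions defining an object of $\struc$ --- that $\comon=z\,\Cunit$ is an endomap of $(X,w)$, that $\epsilon$ is an $\Smon$-algebra $2$-cell, and that the two induced left-semi $\Smon$-algebra structures on $X$ coincide --- are inherited from the corresponding structural properties of the universal object $\Dmon X$ (Propositions~\ref{prop:Dstruct1}, \ref{prop:Dstruct2}, \ref{prop:Dstruct}) by whiskering with the algebra map $x$. A strict $\Dmon$-algebra map (resp.\ $2$-cell) preserves $w$, $z$ and $\epsilon$ by composition with $\Scol$, $\Ccol$ and $\Icell$.

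For $\Psi\Phi=\id_\struc$: starting from $(X,w,z,\epsilon)$ and the induced $x$, the cocone equation~\eqref{eq:cocone} directly yields $x\,\Scol=w$ and $x\,\Ccol=z$, and a short unwinding of the definition of $\gamma$ (built from $\Dcell$ and $\Sunit$) returns $x.\gamma=\epsilon$. Conversely, for $\Phi\Psi=\id_{\Algs\Dmon}$: starting from $(X,x)$, extracting $(w,z,\epsilon)$ via $\Psi$, and re-constructing $x':\Dmon X\to X$ via Proposition~\ref{prop:key}, one verifies that $x$ itself already satisfies the colax cocone equation~\eqref{eq:cocone} with respect to the extracted data --- this is immediate from how $w$, $z$, $\epsilon$ are defined. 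The $1$-dimensional universality of the colax colimit $\Dmon X$ then forces $x'=x$. The main obstacle will be the well-definedness of $\Psi$ on objects: checking that the compatibility $\comon\,w=z\,\Mmap$ transfers from $\Dmon X$ down to $X$, and that $\epsilon$ really is an $\Smon$-algebra $2$-cell. Both reduce to careful whiskering computations exploiting that $\Scol$ and $\Ccol$ interact correctly with the $\Dmon$-algebra structure on $X$.
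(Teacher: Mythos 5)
Your proposal is correct and follows essentially the same route as the paper: the $2$-functor $\struc\to\Algs\Dmon$ is obtained from Proposition~\ref{prop:key} and its naturality (which gives the multiplication axiom and the preservation of maps and $2$-cells), and the inverse is obtained from the monad map $\Scol$ and the left-semi monad map $\Ccol$. The one point you flag but do not carry out --- that $\comon_x\,w=z\,\Mmap$ and that $\epsilon=x.\gamma$ is an $\Smon$-algebra $2$-cell --- is exactly where the paper performs its only substantive computation, namely establishing $x.\Icell=\epsilon.x$ from the algebra axiom $x\,\Dmult=x\,(\Dmon x)$ and the naturality of $\Sunit$ and $\Dcell$, and then cancelling $x$ by precomposition with $\Smon\Dunit$; your ``careful whiskering'' remark points at the right reduction.
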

\begin{proof}[Proof sketch]
  It remains to prove the direct implication. Assume $\Dmon X\xrightarrow x X$ is a $\Dmon$-algebra.

\begin{itemize}
\item 
  Since $\Scol:\Smon\to\Dmon$ is a monad map, $w:\Smon X\xrightarrow \Scol \Dmon X \xrightarrow x X$ is an $\Smon$-algebra.
\item  By Propositions~\ref{prop:mapmonlsa}, since $\Ccol:\Cmon\to\Dmon$ is a left-semi monad map, $z:\Cmon X\xrightarrow \Ccol \Dmon X \xrightarrow x X$ is a left-semi $\Cmon$-algebra with $2$-cell $\Dcell$ where  we denote $\comon_x= z\,\Cunit$
  \begin{equation}\label{eq:carac}
    \tz{carac1}
  \end{equation}
\item  We know that $\Ret\,\Ccol=\Mmap$ and $\Ret\Ccol=\id_{\Dmon X}$ and $z=x\,\Ccol$ is a left-semi $\Cmon$-algebra. We deduce  $\Smon X\xrightarrow w X\xrightarrow \comon_x X= \Smon X\xrightarrow \Mmap \Cmon X\xrightarrow z X$ using the following.
  \[\tz{carac2}\]
\item   We prove that $\Rcell$ is in $\Algs \Smon$. 
  We first remark that $x.\Icell=\Rcell.x$. Indeed, by naturality of $\Cunit$ and of $\Dcell$, we have $\Dcell.\Sunit\, x=(\Dmon x).\Dcell.\Sunit$. Because $x$ is a $\Dmon$-algebra, $x.\Dcell.\Sunit\, x=x\,(\Dmon x).\Dcell.\Sunit=x\,\Dmult.\Dcell.\Sunit$ and we conclude as $\Dmult.\Dcell.\Sunit=\Icell$.

  Then, as $\Icell$ is an $\Smon$-algebra $2$-cell by construction and $x$ is a $\Smon$-algebra, so that $\Rcell.x$ is an $\Smon$-algebra $2$-cell. This can be represented by the lhs $2$-cell equality which results in the rhs equality by precomposition by $\Smon\Dunit$.   This proves that $\Rcell$ is an $\Smon$-algebra $2$-cell.
  \[\tz{carac3} \qquad \qquad\tz{carac4}\]

\end{itemize}
\end{proof}

  Our analysis of the $2$-monad $\Dmon$ involved consideration of left-semi $\Cmon$-algebras. We can immediately say something about them. Suppose that $\Cmon^+$ is the result of applying our construction to the map 
$\unit:\monI\to\Cmon$ of monads given by the unit. By Theorem~\ref{th:characterisation}, we deduce the following.
\begin{proposition}\label{prop:id}
 $\Algs{\Cmon^+}$ is isomorphic to  $\lsalg\Cmon$
\end{proposition}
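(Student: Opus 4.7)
The plan is to invoke Theorem~\ref{th:characterisation}, which already identifies $\Algs{\Cmon^+}$ with the Structure $2$-category built from the monad map in question, here $\Mmap = \unit : \monI \to \Cmon$. It therefore suffices to show that, in this special case, $\struc$ reduces on the nose to $\lsalg\Cmon$. The proof is essentially an unpacking of Section~\ref{subsec:struc} when $\Smon = \monI$, and no real work is needed beyond checking that each piece of data and each axiom either becomes vacuous or recovers the corresponding ingredient in the definition of a left-semi $\Cmon$-algebra.

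First I would handle the object level. An object of $\struc$ consists of an $\Smon$-algebra $w : \Smon X \to X$ together with a left-semi $\Cmon$-algebra $(z,\epsilon)$, subject to the conditions that $\comon = z\,\Cunit$ is an endomap of the $\Smon$-algebra, that $\epsilon$ is an $\Smon$-algebra $2$-cell, and that the two induced left-semi $\Smon$-algebra structures coincide. Specialising to $\Smon = \monI$, the $\Smon$-algebra structure is forced to be $\id_X$, so the first two compatibility conditions are automatic. The equality of induced structures reduces to $\comon\, w = z\,\Mmap$; with $w = \id_X$ and $\Mmap = \Cunit$ both sides equal $z\,\Cunit = \comon$, so this too holds automatically. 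Hence the object data of $\struc$ is precisely a left-semi $\Cmon$-algebra on $X$.

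Next I would unpack the $1$- and $2$-cell data. A $1$-cell $p$ in $\struc$ is required to be both an $\Smon$-algebra map and a left-semi $\Cmon$-algebra map; the first condition is vacuous when $\Smon = \monI$, so $p$ is exactly a strict map of left-semi $\Cmon$-algebras. The same reduction applies at the $2$-cell level. Composition and identities are inherited from $\cat\cK$ on both sides, so they match. This gives a $2$-functor $\struc \to \lsalg\Cmon$ which is the identity on underlying data and is therefore an isomorphism of $2$-categories. Composing with $\Algs{\Cmon^+} \cong \struc$ from Theorem~\ref{th:characterisation} yields the proposition.

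There is no substantive obstacle: the whole argument is a direct specialisation of the characterisation theorem to the trivial source monad, and the only thing to verify is that all the nontrivial-looking compatibility conditions in $\struc$ degenerate to tautologies when $\Smon = \monI$. Once this is observed, the isomorphism $\Algs{\Cmon^+} \cong \lsalg\Cmon$ is immediate.
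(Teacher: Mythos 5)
Your proposal is correct and matches the paper's own argument: the paper states the proposition as an immediate consequence of Theorem~\ref{th:characterisation}, which is exactly your strategy, and your unpacking of why $\struc$ collapses to $\lsalg\Cmon$ when $\Smon=\monI$ (the algebra structure forced to be $\id_X$, the compatibility conditions becoming tautologies) is the routine verification the paper leaves implicit.
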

So the $2$-category of left-semi $\Cmon$-algebras is in fact monadic over the base $\cat\cK$.

\section{The linear-non-linear $2$-monad}
\label{sec:lnl}

In this section, we show how our theory applies in the case of most
immediate interest to us. We take for $\Smon$ the $2$-monad for
symmetric strict monoidal categories: we give a concrete presentation
in Subsection~\ref{subsec:smon}. We take for $\Cmon$ the $2$-monad for categories
with strict finite products: we give a concrete presentation
in Subsection~\ref{subsec:cmon}. There is an evident map of monads
$\Smon\to\Cmon$ and in Subsection~\ref{subsec:dmon}, we describe the $2$-monad
$\Dmon$ obtained by our construction.

In further work we shall develop general theory to show that this
$\Dmon$ in particular extends from $\CAT$ to profunctors. This gives a
notion of algebraic theory in the sense of Hyland~\cite{Hyland14} and
we shall use that to handle the linear and non-linear substitutions
appearing in differential lambda-calculus~\cite{er:tdlc}.

\subsection{The $2$-monad for symmetric strict monoidal categories}\label{subsec:smon}

For a category $A$, let $\Smon A$ be the following category. The objects
are finite sequences $\seq{a_i}{i\in \nset n}$ with $n\in\N$ and
$a_i\in A$. The morphisms
\[\seq{a_i}{i\in \nset n}\to \seq{a'_j}{j\in \nset m}\]
consist of a bijection $\sigma:\nset n\to\nset m$ (so $n$ and $m$ are equal) and for each $j\in\nset m$ a map $a_{\sigma(j)}\to a'_j$ in $A$. The identity and composition are evident.

$\Smon$ extends readily to a $2$-functor on $\CAT$ and it has the structure of a $2$-monad where $\Sunit:A\to\Smon A$ takes $a$ to the singleton $\seq a{}$ and $\Smult:\Smon^2 A\to\Smon A$ acts on objects by concatenation of sequences.

Each $\Smon A$ has the structure of a symmetric monaidal category: the unit is the empty sequence and tensor product is given by concatenation. One can check directly that $\Sunit:A\rightarrow \Smon A$ makes $\Smon A$ the free symmetric strict monoidal category on $A$. Moreover to equip $A$ with the structure of a symmetric strict monoidal category is to give $A$ an $\Smon$-algebra structure. Maps and $2$-cells are as expected so we identify $\Algs\Smon$ as the $2$-category of strict monoidal categories, strict monoidal functors and monoidal $2$-cells.

\subsection{The $2$-monad for categories with products}\label{subsec:cmon}
For a category $A$, let $\Cmon A$ be the following category. The objects
are finite sequences $\seq{a_i}{i\in \nset n}$ with $n\in\N$ and
$a_i\in A$. The morphisms
\[\seq{a_i}{i\in \nset n}\to \seq{a'_j}{j\in \nset m}\]
consist of a map $\phi:\nset m\to\nset n$  and for each $j\in\nset m$ a map $a_{\phi(j)}\to a'_j$ in $A$. The identity and composition are evident.

$\Cmon$ extends readily to a $2$-functor on $\CAT$ and it has the structure of a $2$-monad where $\Cunit:A\to\Cmon A$ takes $a$ to the singleton $\seq a{}$ and $\Cmult:\Cmon^2 A\to\Cmon A$ acts on objects by concatenation of sequences.

Each $\Cmon A$ has the structure of a category with strict products: the terminal object is the empty sequence and product is given by concatenation. Again, one can check directly that $\Cunit:A\rightarrow \Cmon A$ makes $\Cmon A$ the free category with strict products on $A$. Again, to equip $A$ with the structure of a category with strict products is to give $A$ a $\Cmon$-algebra structure. Maps and $2$-cells are as expected so we identify $\Algs\Cmon$ as the $2$-category of categories with strict products, functors preserving these strictly and appropriate  $2$-cells.

\subsection{The $2$-monad for  linear-non-linear substitution}\label{subsec:dmon}

There is a map $\Mmap:\Smon\to\Cmon$ which on objects takes $\seq{a_i}{i\in \nset n}\in\Smon A$ to  $\seq{a_i}{i\in \nset n}\in\Cmon A$ and includes the maps in $\Smon A$ into those in $\Cmon A$ in the obvious way. It accounts for the evident fact that every category with strict products is a symmetric strict monoidal category. We describe the $2$-monad $\Dmon$ obtained from  $\Mmap$ by our colimit construction.

For a category $A$, $\Dmon A$ is the following category. The objects are
$\seq{a_i^{\epsilon_i}}{i\in \nset n}$ with $n\in\nset n$, $a_i\in A$ and the indices $\epsilon_i$ chosen from the set $\{\Svar,\Cvar\}$ ($\Svar$ indicates linear and $\Cvar$ non-linear). For $a=\seq{a_i^{\epsilon_i}}{i\in \nset n}$, write $\Svar_a$ for $\{i\,|\, \epsilon_i=\Svar\}$.  Then a  morphism
\[\seq{a_i}{i\in \nset n}\to \seq{a'_j}{j\in \nset m}\]
is given by first a map $\phi:\nset m\to\nset n$ satisfying the condition
\[\phi^{-1}(\Svar_a) \subseteq \Svar_{a'}\qtand \phi_{|\phi^{-1}(\Svar_a)}:\phi^{-1}(\Svar_a)\to\Svar_a\text{ is a bijection;} \]
and secondly by for each $j\in\nset m$, a map $a_{\phi(j)}\to a'_j$ in $A$.

$\Dmon$ extends readily to a $2$-functor on $\CAT$ and it has the structure of a $2$-monad as follows. The unit $\Dunit:A\to\Dmon A$ takes $a\in A$ to $\seq{a^\Svar}{}$. The multiplication $\Dmult: A\to\Dmon A$ acts by concatenating the objects and with the following behaviour on indices: objects of $\Dmon^2 A$ have shape
\[\seqs{\seqs{\dots}\dots \seqs{\dots a^\epsilon\dots}^\eta\dots\seqs{\dots}}\]
so that each $a\in A$ has two indices; in the concatenated string in $\Dmon A$, $a$ has index $\Svar$ just when both $\epsilon$ and $\eta$ are $\Svar$.

One can now readily see the structure on $\Dmon A$ involved in its definition.
\begin{itemize}
\item $\Dmon A$ is clearly an $\Smon$-algebra and $\Scol:\Smon A\to\Dmon A$ sends $\seqs{a_1,\dots,a_n}$ to $\seqs{a_1^\Svar,\dots,a_n^\Svar}$
\item $\Ccol:\Cmon A\to\Dmon A$ sends $\seqs{a_1,\dots,a_n}$ to $\seqs{a_1^\Cvar,\dots, a_n^\Cvar}$ given by the identity on $\nset n$ and is evidently an $\Smon$-algebra map
\item $\Dcell:\Ccol\Mmap\to\Scol$ is given for each $\seqs{a_1,\dots,a_n}\in\Smon A$ by the map $\seqs{a_1^\Cvar,\dots,a_n^\Cvar}\to\seqs{a_1^\Svar,\dots,a_n^\Svar}$ given by the identity on $\nset n$ and identities $a_i\to a_i$ for each $i$.
\\
It is also easy to see $h:\Dmon A\to \Cmon A$: it sends $\seqs{a_1^{\epsilon_1},\dots,a_n^{\epsilon_n}}$ to $\seqs{a_1,\dots,a_n}$. It should now be straightforward for the reader to identify the $2$-cell $\beta$ and deduce that $\Dmult$ is just as described.
\end{itemize}

Now, we can use our Theorem~\ref{th:characterisation} to give a description of what a $\Dmon$-algebra is in this case. It is an object of our structure category described in Subsection~\ref{subsec:struc}. That means it is a symmetric monoidal category $X$ equipped with a strictly idempotent comonad $f:X\to X$ with $\epsilon:f\Rightarrow \id_X$ and with that structure in the $2$-category of symmetric monoidal categories and strict maps; it is such that the full subcategory of fixpoints of $f$ is equipped with the structure of a category with products; moreover the effect of tensoring objects of $X$ and then applying $f$ is equal to that of first applying $f$ and then taking the product.

\subsection{Next steps}\label{nextstep}

  Starting from the observation that the $2$-monad $\Smon$ for strict monoidal categories and the $2$-monad $\Cmon$ for categories with strict products can be combined into a $2$-monad $\Dmon$ mixing the two related structures, we have introduced a new notion for combining $2$-monads as the colimit of a map of monads. We have proved that our construction gives rise to a $2$-monad in Theorem~\ref{th:Qmonad} and characterised its algebras in Theorem~\ref{th:characterisation}.

  Our next step will be to give conditions under which $\Dmon$ admits an extension to a pseudomonad on $\Prof$~\cite{fiore2018relative}. 
We draw attention to the following issue which we need to address. It is clear from~\cite{fiore2018relative} that the $2$-monad $\Smon$  for symmetric strict monoidal categories and $\Cmon$ for categories with strict products admit extensions to pseudomonads on $\Prof$. However, we cannot use our colimit construction at this level as we only have access to bicolimits. All the same, the characterisation of Theorem~\ref{th:characterisation} can be reworked so as  to describe pseudo $\Dmon$-algebras. Then one can show that the presheaf construction has a lifting to pseudo $\Dmon$-algebras and so deduce by~\cite{fiore2018relative} the wanted extension of $\Dmon$ to $\Prof$.

 The extension of $\Dmon$ to $\Prof$ will give a notion of linear-non-linear multicategory which will serve as a basis for describing the substitution structure at play in differential $\lambda$-calculus~\cite{fiore05}. In parallel, we shall compare our approach to existing approaches to the combination of linearity and non-linearity which arises from Linear Logic~\cite{Benton94}. We hope to show that starting from the models of Benton~\cite{Benton94} or Blute-Cokcett-Seely~\cite{blute2006differential}, we can obtain a $\Dmon$-algebra (or at least a $\Dmon$-multicategory) which accounts for the usual practice of modelling linear-non-linear calculi. 

\bibliographystyle{eptcs}
\bibliography{references}


\end{document}